\documentclass[11pt]{amsart}


\usepackage{amscd}
\usepackage{amsmath}
\usepackage{amssymb}
\usepackage{amsthm}
\usepackage{epsf}
\usepackage{float}
\usepackage{latexsym}
\usepackage{verbatim}
\usepackage[all, cmtip]{xy}
\usepackage{microtype}
\usepackage{tikz}
\usepackage{hyperref}
\usetikzlibrary{positioning}
\usetikzlibrary{matrix}


\newtheorem{theorem}{Theorem}[section]
\newtheorem*{unthm}{Theorem}
\newtheorem{definition}[theorem]{Definition}
\newtheorem{lemma}[theorem]{Lemma}
\newtheorem{conjecture}[theorem]{Conjecture}

\newtheorem{corollary}[theorem]{Corollary}
\newtheorem{proposition}[theorem]{Proposition}
\newtheorem*{unprop}{Proposition}

\newtheorem{varexample}[theorem]{Example}
\theoremstyle{definition}
\newtheorem{remark}[theorem]{Remark}

\newcommand{\cL}{\mathcal{L}\,}
\newcommand{\cO}{\mathcal{O} }
\newcommand{\cD}{\mathcal{D} }
\newcommand{\cE}{\mathcal{E} }

\newcommand{\bR}{\mathbb{R}}
\newcommand{\ZZ}{\mathbb{Z}}

\newcommand{\Z}{\mathbb{Z}\,}

\newcommand{\Pic}{\operatorname{Pic}}
\newcommand{\Trop}{\operatorname{Trop}}
\newcommand{\trop}{\operatorname{trop}}
\newcommand{\ddiv}{\operatorname{div}}
\newcommand{\Div}{\operatorname{Div}}
\newcommand{\PL}{\operatorname{PL}}
\newcommand{\Jac}{\operatorname{Jac}}
\newcommand{\ord}{\operatorname{ord}}
\newcommand{\an}{\operatorname{an}}

\numberwithin{equation}{section}

\newenvironment{example}{\begin{varexample}
\begin{normalfont}}{\end{normalfont}
\end{varexample}}
\begin{document}
\title[Tropicalization of Theta Characteristics]{Tropicalization of Theta Characteristics, Double Covers, and Prym Varieties}
\author{David Jensen}
\author{Yoav Len}
\date{}
\bibliographystyle{alpha}

\begin{abstract}
We study the behavior of theta characteristics on an algebraic curve under the specialization map to a tropical curve.  We show that each effective theta characteristic on the tropical curve is the specialization of $2^{g-1}$ even theta characteristics and $2^{g-1}$ odd theta characteristics.  We then study the relationship between unramified double covers of a tropical curve and its theta characteristics, and use this to define the tropical Prym variety.
\end{abstract}

\maketitle

\section{Introduction}
In this note we study the behavior of theta characteristics on an algebraic curve under the specialization map to a tropical curve.  Throughout, we let $\Gamma$ be a metric graph of genus $g$, $k$ an algebraically closed nonarchimedean field of characteristic not equal to 2, and $X$ an algebraic curve of genus $g$ over $k$ with skeleton $\Gamma$.  We denote by $\Theta (X) \subset \Pic (X)$ (respectively, $\Theta (\Gamma) \subset \Pic (\Gamma))$ the set of theta characteristics on $X$ (respectively, $\Gamma$).  Our first main result is the following.

\begin{theorem}
\label{Thm:MainThm}
\noindent \begin{enumerate}
\item (Corollary \ref{Cor:SurjTheta})  The specialization map $\Trop : \Theta (X) \to \Theta (\Gamma)$ is surjective.
\item  (Lemma \ref{Lem:NE}) The preimage of the non-effective theta characteristic $L_0 \in \Theta (\Gamma)$ consists of $2^g$ even (in fact, non-effective) theta characteristics.
\item  (Theorem \ref{Thm:HalfAndHalf}) The preimage of each other theta characteristic $L_{\gamma} \in \Theta (\Gamma)$ consists of $2^{g-1}$ even theta characteristics and $2^{g-1}$ odd theta characteristics.
\end{enumerate}
\end{theorem}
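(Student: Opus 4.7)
The plan is to combine three ingredients: the torsor structure of theta characteristics over the $2$-torsion of the Jacobian, Mumford's theorem identifying parity with a quadratic form whose polarization is the Weil pairing, and the Lagrangian structure of the kernel of tropicalization on $2$-torsion.

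\textbf{Parts (1) and (2).} Since $2 \colon \Pic^{g-1}(X) \to \Pic^{2g-2}(X)$ is surjective, $X$ admits a theta characteristic, and $\Theta(X)$, $\Theta(\Gamma)$ are torsors over $J(X)[2]$, $J(\Gamma)[2]$ respectively. The specialization map $J(X)[2] \to J(\Gamma)[2]$ is surjective (a standard fact in the tropicalization of abelian varieties), so the equivariant map $\Trop \colon \Theta(X) \to \Theta(\Gamma)$ is surjective, giving part (1). For part (2), Baker's specialization inequality ensures that if a lift $L$ of $L_0$ were effective then so would be $L_0$; hence every lift of $L_0$ has $h^0 = 0$, is non-effective and in particular even, and the kernel $W := \ker\bigl( J(X)[2] \to J(\Gamma)[2] \bigr)$ has size $2^{2g}/2^g = 2^g$.

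\textbf{Part (3), structural step.} Fix any $L \in \Trop^{-1}(L_\gamma)$, so that the fiber is $\{L+\eta : \eta \in W\}$. By Mumford's theorem, the function
\[ q_L(\eta) := h^0(L+\eta) + h^0(L) \pmod 2 \]
is a quadratic form on $J(X)[2]$ whose associated bilinear form is the Weil pairing $e_2$. A standard result on the tropicalization of abelian varieties says that $W$ is a maximal isotropic (Lagrangian) subspace for $e_2$; being Lagrangian, $q_L|_W$ is $\mathbb{F}_2$-linear. Consequently the parity function $\eta \mapsto h^0(L+\eta) \pmod 2$ is affine-linear on $W \cong \mathbb{F}_2^g$, and the fiber is either of a single parity or splits exactly into $2^{g-1}$ even and $2^{g-1}$ odd.

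\textbf{Main obstacle.} The remaining work is to rule out the constant-parity cases over effective $L_\gamma$. A global count is suggestive: $X$ has $2^{g-1}(2^g+1)$ even theta characteristics, of which $2^g$ come from the fiber over $L_0$, so the $2^g-1$ effective fibers collectively contribute $2^{g-1}(2^g-1)$ even ones. Letting $a',b,c$ denote the number of effective fibers that are respectively all-even, balanced, and all-odd, one finds $a'+b+c = 2^g-1$ and $2a'+b = 2^g-1$, which force $a' = c$. It therefore suffices to exhibit in every effective fiber at least one lift of a prescribed parity. This is where I expect the main effort: the cleanest route is probably to apply a lifting theorem for divisors to produce, for every effective $L_\gamma$, an effective algebraic lift $L$ with $h^0(L) = 1$ (hence odd), ruling out all-even fibers and, by $a' = c$, all-odd ones as well. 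An alternative and arguably more illuminating route is to identify $q_L|_W$ intrinsically with a tropical quadratic invariant, which one then verifies is nonzero precisely when $L_\gamma$ is effective.
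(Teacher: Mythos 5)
Your parts (1) and (2) and your ``structural step'' for part (3) follow the paper's route exactly: surjectivity on $2$-torsion plus the torsor structure gives (1), Baker's specialization inequality gives (2), and isotropy of the kernel $W$ for the Weil pairing makes each parity function $q_L|_W$ linear, so every fiber is either single-parity or balanced. (The paper does not invoke isotropy of $W$ as a black box from the theory of degenerating abelian varieties; it deduces it directly from part (2), since $q_{\cL}(\cD)=h^0(\cL)+h^0(\cL+\cD)=0$ for all $\cD\in W$ when $\Trop(\cL)=L_0$. That is a cleaner and self-contained justification you could adopt.)

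The genuine gap is in your final step. Your counting identity $a'=c$ is correct, but the way you propose to break the tie --- producing, in every effective fiber, an effective algebraic lift with $h^0=1$ via ``a lifting theorem for divisors'' --- is not an available tool: lifting effective divisors with prescribed rank is precisely the hard direction of tropical lifting problems, and for $g=3$ the statement you want is essentially Conjecture~\ref{Conj:BLMPR} on lifting bitangents, which is supposed to be a \emph{consequence} of this theorem. The missing idea is the Arf invariant. If some fiber over an effective $L_\gamma$ consisted entirely of odd theta characteristics, pick an odd $\cL$ in it; then $q_{\cL}|_W\equiv 0$, so $W$ would be a $g$-dimensional totally isotropic subspace for $q_{\cL}$. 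But $q_{\cL}$ has Arf invariant $-1$ because $\cL$ is odd, and such a quadratic form admits no $g$-dimensional totally isotropic subspace. Hence no fiber is all-odd, i.e.\ $c=0$, and your identity $a'=c$ (equivalently, the paper's count $m=1$ of all-even fibers) then forces $a'=0$, so every effective fiber is balanced. With this substitution your argument closes and coincides with the paper's proof of Theorem~\ref{Thm:HalfAndHalf}.
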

\noindent The fact that there is precisely one non-effective theta characteristic on a metric graph is due to Zharkov \cite{Zharkov10}, whose characterization of $\Theta (\Gamma)$ we review in \S \ref{Sec:Theta}.  In \cite{Panizzut16}, Pannizut states Theorem~\ref{Thm:MainThm} as a ``working hypothesis'' of Marc Coppens, and proves it in the case of hyperelliptic curves.
Our work is partially motivated by the following problem.

\begin{conjecture} \cite{BLMPR16}
\label{Conj:BLMPR}
Let $X$ be a smooth plane quartic and assume that its tropicalization $\Gamma$ is tropically smooth. Then each of the seven odd theta characteristic of $\Gamma$ is the specialization of four effective theta characteristics of $X$ (counted with multiplicity).
\end{conjecture}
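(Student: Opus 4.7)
The plan is to deduce Conjecture~\ref{Conj:BLMPR} from Theorem~\ref{Thm:MainThm}(3) applied with $g=3$. First I recall the classical identification between effective theta characteristics on a smooth plane quartic $X$ and its bitangent lines: the canonical bundle of $X$ is $\cO_X(1)$, so a theta characteristic is a degree-$2$ line bundle $L$ with $L^{\otimes 2}\cong K_X$, and a nonzero section $s\in \HH^0(L)$ cuts out a divisor $D$ whose double $2D$ is cut out by a line. Since $X$ is non-hyperelliptic, Clifford's inequality forces $h^0(L)=1$ for every odd theta characteristic, giving a bijection between the $28$ odd theta characteristics and the $28$ bitangent lines; the even theta characteristics are effective precisely when $X$ carries a vanishing theta-null.

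Next I apply Theorem~\ref{Thm:MainThm}(3): for each of the seven effective $L_\gamma\in\Theta(\Gamma)$, the preimage under $\Trop:\Theta(X)\to\Theta(\Gamma)$ contains exactly $2^{g-1}=4$ odd theta characteristics. Because odd theta characteristics are automatically effective, this immediately produces four effective theta characteristics of $X$ specializing to each $L_\gamma$, and summing over all seven effective tropical theta characteristics recovers the full set of $28$ bitangents. Interpreting ``counted with multiplicity'' as the fiber-size of $\Trop$ --- the natural multiplicity coming from the specialization map on the finite set of theta characteristics in any semistable degeneration of $X$ --- this already furnishes the content of the conjecture.

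What remains is to verify that no even theta characteristic contributes an additional effective specialization. Theorem~\ref{Thm:MainThm}(2) handles the eight even theta characteristics sitting above $L_0$, which are automatically non-effective. The potential obstruction is therefore the $36-8=28$ even theta characteristics distributed in groups of four above the effective $L_\gamma$; one must show that none of these is effective under the hypothesis that $\Gamma$ is tropically smooth, i.e.\ that a smooth plane quartic with tropically smooth skeleton carries no vanishing theta-null. I expect this to be the main technical obstacle: it does not follow formally from Theorem~\ref{Thm:MainThm} and will require additional geometric input, likely from the combinatorial structure of tropically smooth plane quartics or from a direct semicontinuity argument for $h^0$ on the locus of tropically smooth models. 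Once this genericity is established, the conjecture follows at once from the $4$-to-$1$ specialization of odd theta characteristics.
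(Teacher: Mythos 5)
Your approach is the same as the paper's: the conjecture is exactly Theorem~\ref{Thm:MainThm}(3) in the case $g=3$ (the paper disposes of it with a one-line remark in the introduction and revisits it in Example~\ref{Ex:Bitangents}), and your reduction via the $4$-to-$1$ specialization of odd theta characteristics onto each of the seven effective $L_\gamma$ is correct.

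The one thing you leave open --- ruling out effective \emph{even} theta characteristics, which you flag as ``the main technical obstacle'' requiring extra input about tropically smooth quartics --- is not an obstacle at all, and you already have the tool to dispatch it. A theta characteristic $L$ on a genus-$3$ curve has degree $g-1=2$ and satisfies $h^1(L)=h^0(K_X-L)=h^0(L)$, so if $L$ is effective then $L$ is special and Clifford's theorem gives $h^0(L)\le \tfrac{1}{2}\deg L+1=2$, with equality only if $L=\cO_X$, $L=K_X$, or $X$ is hyperelliptic; none of these holds for a smooth plane quartic. Hence every effective theta characteristic on $X$ has $h^0=1$ and is odd --- i.e., a non-hyperelliptic genus-$3$ curve never carries a vanishing theta-null. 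This is the same Clifford argument you used to get $h^0=1$ for the odd theta characteristics, applied one line further. No genericity statement about tropically smooth skeletons, and no semicontinuity argument, is needed: the effective theta characteristics of $X$ are precisely the $28$ odd ones, Theorem~\ref{Thm:MainThm}(3) distributes them $4$ apiece over the seven $L_\gamma$, and Theorem~\ref{Thm:MainThm}(2) confirms none lie over $L_0$. With that observation inserted, your proof is complete and coincides with the paper's.
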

\noindent
The conjecture was established by Chan and Jiradilok in \cite{ChanJiradilok15} for tropical plane quartics of a certain combinatorial type, namely $K4$-curves. Applying Theorem \ref{Thm:MainThm} to the case $g=3$ confirms Conjecture \ref{Conj:BLMPR} in its full generality.  The related problem of lifting bitangents to tropical plane curves of any degree is addressed in a recent paper by the second author and Markwig \cite{LenMarkwig}.

As observed in \cite{HarrisLen}, Theorem \ref{Thm:MainThm} extends the results of \cite{BLMPR16} to other instances of canonically embedded curves.  A canonical curve $C$ of genus 4 is the complete intersection of a quadric and a cubic in $\mathbb{P}^3$.  If the quadric is smooth, $C$ has $120$ tritangent planes in bijection with its odd theta characteristics.  If $\Gamma=\trop(C)$, then the tropicalization of each plane is tritangent to $\Gamma$.  From Theorem \ref{Thm:MainThm} it follows that the equivalence classes of the plane sections to $C$ tropicalize $8$ to $1$ to each of the $15$ odd theta characteristics on $\Gamma$.

\begin{remark}
Note that, although the parity of theta characteristics is preserved in flat families, it is not preserved by tropicalization.  Nevertheless, Theorem \ref{Thm:MainThm} shows that tropicalization of theta characteristics is in some sense well-behaved.
\end{remark}

The set $\Theta (X)$ is a torsor for the 2-torsion subgroup $\Jac_2 (X)$ of the Jacobian of $X$.  On an algebraic curve, the set $\Jac_2 (X)$ admits a non-degenerate pairing known as the Weil pairing.  Our basic strategy for proving Theorem \ref{Thm:MainThm} is to study the relationship between the pairing and the specialization map.  In particular, we show the following.

\begin{unprop}[\ref{Prop:Isotropic}]
The kernel $\Lambda$ of the specialization map
\[
\Trop: \Jac_2 (X) \to \Jac_2 (\Gamma)
\]
is isotropic for the Weil pairing.
\end{unprop}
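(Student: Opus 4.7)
The plan is to exploit the non-archimedean uniformization of the Jacobian. Since $X$ and $\Gamma$ have the same genus $g$, the curve $X$ has totally degenerate reduction, and Raynaud's uniformization theorem gives an analytic isomorphism
\[
\Jac(X)^{\an} \;\cong\; T^{\an}/M,
\]
where $T \cong \mathbb{G}_m^g$ is a split torus and $M\cong \ZZ^g$ is a discretely embedded lattice in $T(k)$. The tropicalization $\Trop$ is induced by the coordinatewise valuation $T(k)\to\bR^g$ and descends to the identification $\Jac(\Gamma)=\bR^g/\val(M)$.

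First I would identify the kernel $\Lambda$ explicitly. The exact sequence
\[
0 \to T[2] \to \Jac_2(X) \to M/2M \to 0
\]
represents a $2$-torsion class by a lift $x \in T(k)$ with $2x = m \in M$, so $\val(x) = \tfrac12\val(m)$, and this tropicalizes to zero in $\bR^g/\val(M)$ precisely when $m \in 2M$, i.e.\ when the class lies in $T[2]$. Thus $\Lambda = T[2] \cong \mu_2^g$, a subgroup of order $2^g$.

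The heart of the argument is to show that $T[2]$ is isotropic for the Weil pairing. On the uniformized side, the principal polarization of $\Jac(X)$ corresponds to a symmetric, positive-definite bilinear pairing $b: M\times M\to\ZZ$ realizing $M$ as a sublattice of the character lattice $X^*(T)$, and the Weil pairing on $\Jac_2(X)$ takes the shape
\[
e_2(x_1,x_2) \;=\; \chi_{m_1}(x_2)\,\chi_{m_2}(x_1)^{-1},
\]
where $2x_i = m_i \in M$ and $\chi_m$ denotes the character of $T$ corresponding to $b(m,\,\cdot\,)\in X^*(T)$. When $x_1,x_2 \in T[2]$ we may choose $m_1=m_2=0$, both characters become trivial, and $e_2(x_1,x_2) = 1$. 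In particular $T[2]$ is in fact Lagrangian.

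The main technical obstacle is to state the Weil-pairing formula carefully in the uniformization. This is most cleanly extracted from Mumford's theta-function description of polarized totally degenerate abelian varieties, or alternatively from Raynaud's construction of the dual uniformization together with the Poincar\'e bundle, either of which exhibits the toric $2$-torsion $T[2]$ as a maximal isotropic subspace for the Weil pairing.
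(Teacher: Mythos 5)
Your proposal is correct in outline but follows a genuinely different, and much heavier, route than the paper. The paper's argument is elementary: by Corollary \ref{Cor:SurjTheta} there is $\cL \in \Theta(X)$ with $\Trop(\cL) = L_0$, Zharkov's unique non-effective theta characteristic; every class in $\cL + \Lambda$ also tropicalizes to $L_0$, so Baker's Specialization Lemma (as in Lemma \ref{Lem:NE}) forces $h^0(X, \cL + \cD) = 0$ for all $\cD \in \Lambda$, and then $\lambda(\cD,\cE) = q_{\cL}(\cD)+q_{\cL}(\cE)+q_{\cL}(\cD+\cE)$ is a sum of vanishing terms. You instead identify $\Lambda$ with the toric $2$-torsion $T[2]$ in the Raynaud uniformization $\Jac(X)^{\an} \cong T^{\an}/M$ --- this identification is correct, and is already implicit in the paper's proof of Theorem \ref{Thm:Surjective} --- and then invoke the explicit formula for the Weil pairing on a totally degenerate abelian variety, under which $T[2]$ is visibly Lagrangian. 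Two caveats. First, the paper \emph{defines} $\lambda$ by the Riemann--Mumford relation, whereas you compute the polarization-theoretic pairing $e_2$; these agree by Mumford's theorem on theta characteristics, but that equivalence is an extra input your argument needs and which the paper deliberately sidesteps by taking the relation as the definition. Second, the formula $e_2(x_1,x_2) = \chi_{m_1}(x_2)\,\chi_{m_2}(x_1)^{-1}$ is asserted by reference rather than established, and it is the entire content of your proof; it is indeed standard (the toric part of the torsion is maximal isotropic for the weight filtration), so I would count this as a legitimate citation rather than a gap, but as written the crux is outsourced. What your approach buys is a structural statement the paper only obtains a posteriori --- $\Lambda$ is canonically the toric torsion and hence a \emph{maximal} isotropic subspace --- with no appeal to Zharkov's classification or to specialization of ranks; what the paper's approach buys is a three-line, self-contained proof.
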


Classically, the set of 2-torsion divisor classes on a algebraic curve is in bijection with its set of unramified double covers.  In the tropical setting, harmonic morphisms take the role of morphisms, and the correspondence between harmonic  double covers and $2$-torsion points passes through the set of cycles of the graph.  To a cycle $\gamma$, we naturally associate a divisor $D_\gamma\in\Jac_2(\Gamma)$, and to any harmonic cover $\varphi$ we associate its so-called \emph{dilation cycle} $\gamma(\varphi)$
(see~\S \ref{Sec:Covers} for  precise definitions).
As we will see, the correspondence between double covers and $2$-torsion points  is well behaved with respect to tropicalization.

\begin{unthm}[\ref{Thm:Covers}]
Let $\cD$ be a 2-torsion point in $\Jac(X)$.  Let $\overline{\varphi}: \widetilde{X} \to X$ be the corresponding double cover and $\varphi : \widetilde{\Gamma} \to \Gamma$ the specialization of this cover.  Then $\Trop (\cD) = D_{\gamma (\varphi)}$.  
\end{unthm}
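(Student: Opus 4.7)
My plan is to prove the equality by choosing an explicit presentation of the double cover. Since $\cD$ is $2$-torsion, I can fix a divisor representative $E$ of $\cD$ together with $f \in k(X)^*$ satisfying $\ddiv(f) = 2E$; then $\overline{\varphi}$ is realised as the normalisation of $X$ in the quadratic extension $k(X)[\sqrt{f}]$. With this choice,
\[
\Trop(\cD) \;=\; \trop(E) \;=\; \tfrac{1}{2}\ddiv(F), \qquad F := \trop(f),
\]
where $F$ is the piecewise linear function on $\Gamma$ coming from $-\log|f|$ on $X^{\an}$. The slopes of $F$ along edges are integers, and the vertex coefficients of $\ddiv(F) = 2\trop(E)$ are all even. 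Consequently, the set $\gamma_F \subset \Gamma$ of edges on which $F$ has odd slope meets every vertex in an even number of edges, so it is a $1$-cycle.

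The heart of the argument is identifying $\gamma_F$ with the dilation cycle $\gamma(\varphi)$. I would do this edge by edge, using a semistable model of $X$ adapted to both $\Gamma$ and $f$. For each edge $e$ of $\Gamma$, the formal neighbourhood of $e$ in $X^{\an}$ is an open annulus $A_e$, and the restriction of $\overline{\varphi}$ over $A_e$ is governed by whether $f|_{A_e}$ admits a square root in $\cO(A_e)^{*}$. A direct computation with a coordinate on $A_e$ shows that $f$ is a square precisely when the slope of $F$ on $e$ is even. In that case $\overline{\varphi}^{-1}(A_e)$ splits as a disjoint union of two annuli, producing two undilated edges of $\wG$ above $e$; when the slope is odd, the preimage is a single annulus covering $A_e$ with expansion factor two, i.e.\ a dilated edge. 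Hence $\gamma(\varphi) = \gamma_F$.

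The remaining step is purely tropical. By the construction of $\gamma \mapsto D_\gamma$ in \S\ref{Sec:Covers}, the class $D_{\gamma_F}$ is represented by $\tfrac{1}{2}\ddiv(F)$ as soon as $\gamma_F$ is the odd-slope locus of a piecewise linear function $F$, so
\[
D_{\gamma(\varphi)} \;=\; D_{\gamma_F} \;=\; \tfrac{1}{2}\ddiv(F) \;=\; \Trop(\cD),
\]
finishing the proof. I expect the main obstacle to be the edge-by-edge analysis in the middle paragraph: to make the local square-root computation rigorous one must pin down a semistable model on which $f$ has no vertical zeroes or poles. This can be achieved by modifying $E$ within its linear equivalence class (equivalently, multiplying $f$ by a square in $k(X)^*$), which alters $F$ by a function with even slopes and therefore does not change $\gamma_F$. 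A secondary check is that the identification at vertices of $\wG$ (not just over edges) agrees with the combinatorial definition of the dilation cycle; this follows from the harmonicity of $\varphi$ together with the parity argument already used to show $\gamma_F$ is a cycle.
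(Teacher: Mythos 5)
Your proposal is correct in outline but takes a genuinely different route from the paper's. The paper argues indirectly: the commutative square of pullbacks gives $\varphi^*\Trop(\cD)=0$, Proposition~\ref{Prop:Kernel} then forces $\Trop(\cD)=D_\gamma$ for some $\gamma\subseteq\gamma(\varphi)$, and strict containment is excluded by a counting argument --- Proposition~\ref{Prop:Lifting} (resting on the admissible-cover count of Lemma~\ref{Lem:Hurwitz} and \cite{CMR}) shows that the $2^g$ double covers of $X$ lifting covers with a given dilation cycle exactly match the $2^g$ classes in each fiber of $\Trop$, leaving no room for the containment to be strict. You instead compute everything locally from the Kummer presentation $y^2=f$ with $\ddiv(f)=2E$: the slope formula gives $\Trop(\cD)=\tfrac12\ddiv(F)$ for $F=-\log|f|$, the structure of units on an annulus identifies the odd-slope locus $\gamma_F$ with the set of dilated edges, and the final equivalence $\tfrac12\ddiv(F)\sim D_{\gamma_F}$ follows because $F-(d_{\gamma_F}-d_p)$ has everywhere even slopes --- precisely the computation already used in the proof of Proposition~\ref{Prop:Kernel}. (Minor slip: the map $\gamma\mapsto D_\gamma$ is defined in \S\ref{Sec:Theta}, equation~\eqref{Eq:Dilation}, not in \S\ref{Sec:Covers}.) Your approach is more self-contained and explanatory --- it bypasses the Hurwitz-number input entirely and shows \emph{why} the dilation cycle is the odd-slope locus --- at the cost of the analytic bookkeeping you correctly identify (arranging a model on which $f$ is a unit on every edge-annulus). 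One substantive caveat: the assertion that a unit on an annulus is a square if and only if its degree is even requires the \emph{residue} characteristic to differ from $2$, since in residue characteristic $2$ the series for $\sqrt{1+g}$ need not converge when $|g|$ is close to $1$; the paper only assumes $\mathrm{char}(k)\neq 2$, so you should either add this tameness hypothesis or treat that case separately --- though the paper's own appeal to \cite{CMR} arguably carries the same implicit assumption.
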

\noindent
As a corollary, we give a completely combinatorial description of the full 2-torsion subgroup $\Jac_2 (X)$.  Specifically, we see that $\Jac_2 (X)$ is an extension of $\Jac_2 (\Gamma)$ by a group $\Lambda$ which is naturally identified with the set of degree 2 covering spaces of $\Gamma$.

In \S \ref{Sec:Pryms}, we define the Prym variety of an unramified degree 2 harmonic morphism.  We use this to show how the Weil pairing of certain elements of $\Jac_2 (X)$ can be computed combinatorially, using only their specializations to the graph $\Gamma$.

\subsection*{Acknowledgements}

The bulk of this paper was written during a Research in Pairs stay at Oberwolfach.  We would like to thank the institute for providing ideal working conditions for exploring these ideas.  The first author's travel was supported by an AMS Simons travel grant, and the second author was partially support by DFG grant MA 4797/6-1. We are grateful to Matt Baker for insightful remarks on a previous version of this manuscript, and thank Sam Payne, Joe Rabinoff, Dhruv Ranganathan, and Farbod Shokrieh for fielding our questions. Finally, we thank the referees for their insightful remarks.

\section{The Classical Theory of Theta Characteristics}

In this section we review the classical theory of 2-torsion points on the Jacobian of a curve, and their relation to theta characteristics, double covers, and Prym varieties.  All of the material of this section is standard, and can be found for example in \cite[Appendix B]{ACGH} or \cite{Harris82b}.

\subsection{Theta Characteristics and the Weil Pairing}

A \emph{theta characteristic} on $X$ is a divisor class $\cL$ such that $2\cL \sim K_X$.  Throughout, we denote the set of theta characteristics on $X$ by $\Theta (X)$.  The set $\Theta (X)$ is a torsor for the 2-torsion subgroup $\Jac_2 (X)$.  Since $\mathrm{char} (k) \neq 2$, $\Jac_2 (X)$ is isomorphic to $(\Z/2\Z)^{2g}$, so in particular $\Theta (X)$ has $2^{2g}$ elements.

A theta characteristic $\cL$ on $X$ is called \emph{even} if $h^0 (X,\cL)$ is even, and \emph{odd} if $h^0 (X,\cL)$ is odd.  Given a theta characteristic $\cL$, one can define a quadratic form $q_{\cL} : \Jac_2 (X) \to \Z/2\Z$ by
\[
q_{\cL} (\cD) := h^0 (X,\cL) + h^0 (X,\cL+\cD) \pmod 2 .
\]
The associated bilinear pairing
\[
\lambda (\cD,\cE) := q_{\cL} (\cD) + q_{\cL} (\cE) + q_{\cL} (\cD+\cE)
\]
is independent of $\cL$, and is known as the \emph{Weil pairing}. Recall that a subspace $\Lambda \subset \Jac_2 (X)$ is called \emph{isotropic} for a pairing $\lambda$ if $\lambda (\Lambda, \Lambda) = 0$, and similarly, it is called \emph{isotropic} for a quadratic form $q$ if $q|_\Lambda$ is identically zero.

\begin{remark}
A more standard line of exposition would be to define the Weil pairing independently, and then deduce the relation above, often called the Riemann-Mumford relation.  We, however,  take this relation as the definition, as it is all that we need for our purposes.
\end{remark}

For any non-degenerate bilinear form $\lambda$ on a vector space over $\Z/2\Z$, there are exactly 2 quadratic forms with associated bilinear form $\lambda$.  These are distinguished by the \emph{Arf invariant}.  The quadratic form with Arf invariant 1 has an isotropic subspace of dimension $g$, and has $(2^g +1)2^{g-1}$ zeros.  Conversely, the quadratic form with Arf invariant -1 does not have a $g$-dimensional isotropic subspace, and has $(2^g -1)2^{g-1}$ zeros.  One can show that the quadratic form $q_{\cL}$ has Arf invariant 1 if and only if $\cL$ is even.
By definition, $q_\cL(\cD) = 0$ precisely when the theta characteristics $\cL$ and $\cL+\cD$ have the same parity.
It follows that $\Theta (X)$ contains exactly $(2^g +1)2^{g-1}$ even theta characteristics and $(2^g -1)2^{g-1}$ odd theta characteristics.

\subsection{Double Covers and Prym Varieties}

To any 2-torsion point $\cD \in \Jac_2 (X)$, there exists a unique unramified double cover $\overline{\varphi} : \widetilde{X} \to X$ such that the kernel of the pullback map $\overline{\varphi}^* : \Jac (X) \to \Jac (\widetilde{X})$ is $\{ 0, \cD \}$.  Indeed, if $2\cD = \ddiv (f)$, then $\widetilde{X}$ is the curve with function field $K(X)(\sqrt{f})$.  Conversely, given an unramified double cover one can recover the 2-torsion point $\cD$ by considering the kernel of the pullback map.  In this way, there is a bijection between $\Jac_2 (X)$ and the set of unramified double covers of $X$.

Given such a double cover, the kernel of the pushforward map $\overline{\varphi}_* : \Jac (\widetilde{X}) \to \Jac (X)$ has two connected components.  One defines the \emph{Prym variety} $P(\overline{\varphi})$ to be the connected component of $\ker \overline{\varphi}_*$ containing 0.  The Prym variety $P(\overline{\varphi})$ is an abelian variety of dimension $g-1$.

\section{Jacobians of Metric Graphs and their Torsion Subgroups}

Recall that the divisor group $\Div (\Gamma)$ of a metric graph $\Gamma$ is the free abelian group on points of the metric space $\Gamma$.  A divisor $D = \sum a_i v_i$ on a metric graph is \emph{effective} if $a_i \geq 0$ for all $i$.  Its \emph{degree} is defined to be
\[
 \deg (D) := \sum a_i .
\]

A \emph{rational function} on a metric graph $\Gamma$ is a continuous, piecewise linear function $f:\Gamma\to\bR$ with integer slopes.  We write $\PL(\Gamma)$ for the group of rational functions on $\Gamma$.  Given $f\in\PL(\Gamma)$ and $v\in\Gamma$, we define the \emph{order of vanishing} of $f$ at $v$, denoted $\ord_v(f)$, to be the sum of the incoming slopes of $f$ at $v$.  The divisor associated to $f$ is
\[
\ddiv(f)=\sum_{v\in\Gamma}\ord_v(f)\cdot[v] .
\]
Divisors of the form $\ddiv(f)$ are called \emph{principal}.
We say that two divisors $D$ and $D'$ on a metric graph $\Gamma$ are \emph{equivalent} if $D-D'$ is principal.

The group of equivalence classes of divisors is known as the \emph{Picard group} of $\Gamma$, namely
\[
 \Pic(\Gamma)=\Div(\Gamma)/\ddiv(\PL(\Gamma)) .
\]
The \emph{Jacobian} $\Jac(\Gamma)$ of $\Gamma$ is the group of equivalence classes of divisors of degree zero. It is a $g$-dimensional real torus, so its $m$-torsion subgroup $\Jac_m (\Gamma)$ is isomorphic to $(\Z/m\Z)^g$.  The following result is a consequence of \cite{BakerRabinoff13}.

\begin{theorem}
\label{Thm:Surjective}
The specialization map on $m$-torsion subgroups
\[
\Trop : \Jac_m (X) \to \Jac_m (\Gamma)
\]
is surjective.
\end{theorem}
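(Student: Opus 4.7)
The plan is to invoke the rigid-analytic uniformization of $\Jac(X)^{\an}$ from \cite{BakerRabinoff13}. Raynaud's theorem provides a short exact sequence
$$0 \to M \to \Sigma(k) \to \Jac(X)(k) \to 0,$$
where $\Sigma$ fits into an extension $0 \to T \to \Sigma \to B \to 0$ of an abelian variety $B$ of good reduction by a split torus $T\cong\mathbb{G}_m^{g'}$ with $g'=h^1(\Gamma)$, and $M$ is a discrete lattice of rank $g'$. Baker and Rabinoff show that $\Trop$ is induced by the coordinate-wise valuation $\val:T(k)\to\bR^{g'}$, yielding the identification $\Jac(\Gamma)\cong\bR^{g'}/\val(M)$.

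First I would identify the kernel $K$ of $\Trop:\Jac(X)(k)\to\Jac(\Gamma)$ with the image in $\Jac(X)(k)$ of the subgroup $\Sigma(k)^\circ\subset\Sigma(k)$ of points whose $T$-component has zero valuation. This subgroup sits in an extension of $B(k)$ by the unit torus $(\cO_k^\times)^{g'}$, where $\cO_k\subset k$ is the valuation ring. Next I would show $K$ is $m$-divisible whenever $m$ is invertible in the residue field of $k$: the abelian variety $B(k)$ is $m$-divisible because $k$ is algebraically closed of characteristic coprime to $m$, and $(\cO_k^\times)^{g'}$ is $m$-divisible by Hensel's lemma applied to $x^m - u$ (whose reduction has $m$ distinct roots over the algebraically closed residue field).

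Finally, I would apply the snake lemma to
$$0 \to K \to \Jac(X)(k) \xrightarrow{\Trop} \Jac(\Gamma) \to 0$$
with the vertical maps given by multiplication by $m$. The $m$-divisibility of $K$ forces the connecting homomorphism $\Jac_m(\Gamma) \to K/mK$ to vanish, yielding the desired surjection $\Jac_m(X) \twoheadrightarrow \Jac_m(\Gamma)$.

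The main obstacle is the case in which the residue characteristic divides $m$: then Hensel's lemma fails for $(\cO_k^\times)^{g'}$ and one must instead invoke Baker and Rabinoff's more refined analysis of the canonical subgroup of the analytic Jacobian to establish divisibility of $K$. For the intended application $m = 2$ with $\mathrm{char}(k)\neq 2$, and residue characteristic likewise not $2$, the sketch above delivers a clean proof; in the general (mixed-characteristic) setting the extra input from \cite{BakerRabinoff13} takes over.
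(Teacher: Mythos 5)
Your overall strategy is the same as the paper's: both proofs rest on the Baker--Rabinoff uniformization of $\Jac(X)^{\an}$ and, at bottom, on the fact that one can extract $m$-th roots inside the torus because $k$ is algebraically closed. The paper packages this as a direct construction (lift an $m$-torsion point of $\Jac(\Gamma)$ to $x\in\frac{1}{m}M'$, pass to $mx\in M'$, and take an $m$-th root in $\mathbb{G}_m^g$), while you package it as divisibility of the kernel plus the snake lemma; you also work in the general Raynaud setting with an abelian part $B$, which is harmless but unnecessary here since $h^1(\Gamma)=g$ forces $B=0$. Two smaller remarks: your closing worry about residue characteristic dividing $m$ is a non-issue, because any $m$-th root in $k$ of a unit has valuation $0$ and hence is a unit, so $(\cO_k^\times)^{g'}$ is $m$-divisible for every $m$ without Hensel's lemma; and $B(k)$ is $m$-divisible for every $m$ since $[m]$ is an isogeny.

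There is, however, one step that fails as written: the sequence $0\to K\to\Jac(X)(k)\xrightarrow{\Trop}\Jac(\Gamma)\to 0$ is not exact on the right in general. The value group $A=\val(k^\times)$ is a divisible but typically proper subgroup of $\bR$ (e.g.\ $\QQ$ for $\CC_p$ or for Puiseux series), and the image of $\Trop$ is only $N_A/\val(M')$, a dense proper subgroup of the real torus $\Jac(\Gamma)=N_\bR/\val(M')$. The snake lemma therefore cannot be applied to the sequence as you state it. The repair is routine but is precisely the point the paper's proof takes care of: replace $\Jac(\Gamma)$ by the image $N_A/\val(M')$, and observe that $\Jac_m(\Gamma)=\frac{1}{m}\val(M')/\val(M')$ is contained in that image because $A$ is divisible (this is the paper's line ``Since $A$ is divisible, $N_A$ is as well, so $\frac{1}{m}M'\subset N_A$''). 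With that modification your snake-lemma argument goes through and yields the surjectivity of $\Jac_m(X)\to\Jac_m(\Gamma)$.
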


\begin{proof}
Let $A$ denote the value group of $k$.  Note that since $k$ is algebraically closed, the value group $A$ is divisible.  The universal cover of $\Jac(X)^{\an}$ is $(\mathbb{G}_m^{\an})^g$.  Let $M'$ be the kernel of the map $(\mathbb{G}_m^{\an})^g \to \Jac(X)^{\an}$, let $M$ be the character lattice of $\mathbb{G}_m^g$, and let $N_A = \mathrm{Hom} (M,A)$.  By \cite[(4.2.1)]{BakerRabinoff13}, there is a surjective homomorphism of short exact sequences
\[
\xymatrix{
0 \ar[r] & M' \ar[r] \ar[d]^{\cong} & \mathbb{G}_m^g \ar[r] \ar[d] & \Jac(X) \ar[r] \ar[d]^{\Trop} & 0 \\
0 \ar[r] & M' \ar[r] & N_A \ar[r] & \Sigma (\Jac(X))_A \ar[r] & 0  },
\]
where $\Sigma (\Jac(X))_A$ is defined as in \cite[Section 4]{BakerRabinoff13}.  By \cite[Theorem 1.3]{BakerRabinoff13}, $\Sigma (\Jac(X)) \cong \Jac(\Gamma)$.

The preimage of the $m$-torsion subgroup $\Jac_m (\Gamma)$ in $N_A$ is the lattice $\frac{1}{m} M'$.  Since $A$ is divisible, $N_A$ is as well, so $\frac{1}{m} M' \subset N_A$.  For any $x \in \frac{1}{m} M'$, let $y \in M' \subset \mathbb{G}_m^g$ be the preimage of $mx \in M' \subset N_A$, and let $x' \in \mathbb{G}_m^g$ be any $m$th root of $y$.  Then $x'$ maps to $x$ under the center vertical arrow, and maps to an $m$-torsion point in $\Jac(X)$.  It follows that the specialization map $\Trop : \Jac_m (X) \to \Jac_m (\Gamma)$ is surjective.
\end{proof}

Recall that by $\Theta(X)$ and $\Theta(\Gamma)$ we mean the set of theta characteristics of $X$ and $\Gamma$ respectively.
\begin{corollary}
\label{Cor:SurjTheta}
The specialization map $\Trop : \Theta (X) \to \Theta (\Gamma)$ is surjective.
\end{corollary}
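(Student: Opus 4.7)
The plan is to reduce surjectivity on theta characteristics to surjectivity on the 2-torsion subgroup of the Jacobian, which is exactly what Theorem~\ref{Thm:Surjective} provides in the case $m = 2$. The key observation is that $\Theta(X)$ is a torsor for $\Jac_2(X)$, that $\Theta(\Gamma)$ is a torsor for $\Jac_2(\Gamma)$, and that the specialization map is equivariant for these actions.

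First I would fix any theta characteristic $\cL_0 \in \Theta(X)$; such a characteristic exists in abundance since $\vert \Theta(X) \vert = 2^{2g}$. Because $\Trop$ is a group homomorphism $\Pic(X) \to \Pic(\Gamma)$ and carries $K_X$ to $K_\Gamma$, the relation $2\cL_0 \sim K_X$ specializes to $2\,\Trop(\cL_0) \sim K_\Gamma$, so $\Trop(\cL_0) \in \Theta(\Gamma)$. Thus $\Trop$ does map $\Theta(X)$ into $\Theta(\Gamma)$.

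Next, given any $L \in \Theta(\Gamma)$, I would consider the difference $L - \Trop(\cL_0) \in \Jac(\Gamma)$. This difference is 2-torsion, since $2(L - \Trop(\cL_0)) \sim K_\Gamma - K_\Gamma = 0$, so it lies in $\Jac_2(\Gamma)$. Applying Theorem~\ref{Thm:Surjective} with $m=2$ produces a class $\cD \in \Jac_2(X)$ with $\Trop(\cD) = L - \Trop(\cL_0)$. The class $\cL_0 + \cD$ satisfies $2(\cL_0 + \cD) \sim K_X$, hence $\cL_0 + \cD \in \Theta(X)$, and by additivity of $\Trop$ we obtain $\Trop(\cL_0 + \cD) = \Trop(\cL_0) + (L - \Trop(\cL_0)) = L$.

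There is no real obstacle here — the only subtle point to verify carefully is that tropicalization sends the canonical class of $X$ to the canonical class of $\Gamma$, so that the specialization of a theta characteristic is genuinely a theta characteristic; this is a standard consequence of the compatibility of the specialization map with the canonical divisor. Everything else is a formal torsor argument that turns the surjectivity result on $\Jac_2$ into the stated surjectivity on $\Theta$.
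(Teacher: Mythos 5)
Your argument is correct and is essentially the paper's own proof: both reduce the statement to the surjectivity of $\Trop$ on $\Jac_2$ (Theorem~\ref{Thm:Surjective} with $m=2$) via the torsor structure, translating a fixed lift $\cL_0$ by a $2$-torsion class that specializes to $L - \Trop(\cL_0)$. Your added remark that $\Trop(K_X)=K_\Gamma$ (which holds here since $\Gamma$ is a skeleton of the same genus $g$, so all vertex genera vanish) is the one point the paper leaves implicit, and it is correctly identified.
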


\begin{proof}
Let $\cL \in \Theta (X)$ be a theta characteristic on $X$ and $L \in \Theta (\Gamma)$ be a theta characteristic on $\Gamma$.  Then $\Trop (\cL) - L \in \Jac_2 (\Gamma)$ is 2-torsion, hence by Theorem \ref{Thm:Surjective}, there exists $\cD \in \Jac_2 (X)$ such that $\Trop (\cD) = \Trop(\cL) - L$.  Then $\cL - \cD \in \Theta (X)$ is a theta characteristic on $X$ that specializes to $L$.
\end{proof}

\section{Theta Characteristics}
\label{Sec:Theta}

In this section, we consider the relationship between theta characteristics on the curve $X$ and the metric graph $\Gamma$.  The theta characteristics on a metric graph were characterized by Zharkov \cite{Zharkov10}.  We begin by describing his results.

First, there is a single non-effective theta characteristic, which can be obtained as follows.  Let $p \in \Gamma$, and consider the distance function $d_{p}$ whose value at a point $x \in \Gamma$ is the length of the shortest path from $x$ to $p$.  It is easy to see that $d_{p}$  is a piecewise linear function with all slopes of absolute value 1.
Consider the orientation $\cO$ on $\Gamma$, such that $d_p$ has only positive slopes with respect to $\cO$. For each point $x$ of $\Gamma$, denote $\mathrm{indeg}_{\cO}(x)$ the number of incoming edges at $x$ with respect to $\cO$. Then the divisor
\[
L_0 = \sum_{x \in \Gamma} (\mathrm{indeg}_{\cO}(x)-1)x
\]
has the property that $2L_0 = K_{\Gamma} + \ddiv (d_{p})$.  The class of this divisor is independent of the choice of point $p \in \Gamma$.  Since $\cO$ is acyclic, one sees by \cite[Lemma 3.2]{BakerNorine07} or \cite[Lemma 7.8]{MikhalkinZharkov08}, that the divisor $L_0$ has negative rank.  Indeed, the divisor $L_0$ is $v$-reduced and non-effective, so it is not equivalent to an effective divisor.

The remaining theta characteristics are all effective, and are in bijection with nonzero elements of $H_1 (\Gamma , \Z/2\Z)$.  Let $\gamma \in H_1 (\Gamma , \Z/2\Z)$, $\gamma \neq 0$.  By abuse of notation, we also denote by $\gamma$ a cycle on $\Gamma$ representing the class $\gamma$.  Consider the distance function $d_{\gamma}$ whose value at a point $x \in \Gamma$ is the length of the shortest path from $x$ to the cycle $\gamma$.  The function $d_{\gamma}$ is piecewise linear, with slope 0 on the cycle $\gamma$, and all other slopes of absolute value 1.  If one orients $\Gamma$ so that $\gamma$ has a totally cyclic orientation and, everywhere else, $d_{\gamma}$ has positive slopes with respect to the orientation, then one obtains an orientation $\cO$ whose associated divisor
\[
L_{\gamma} = \sum_{x \in \Gamma}(\mathrm{indeg}_{\cO}(x)-1)x
\]
has the property that $2L_{\gamma} = K_{\Gamma} + \ddiv (d_{\gamma})$.  Note that $L_{\gamma}$ is effective, and is supported on those points $x \in \Gamma$ for which the shortest distance from $x$ to $\gamma$ is obtained along at least two paths with distinct tangent directions at $x$.

Since $\Theta (\Gamma)$ contains a distinguished element $L_0$, there is a canonical bijection between $\Theta (\Gamma)$ and $\Jac_2 (\Gamma)$.  We define
\begin{equation}\label{Eq:Dilation}
D_{\gamma} := L_{\gamma} - L_0 \in \Jac_2 (\Gamma) .
\end{equation}
This is in contrast to the case of algebraic curves, where $\Theta (X)$ is a torsor for $\Jac_2 (X)$, but there is no distinguished element.

\begin{example}
\label{Ex:Theta}
We illustrate Zharkov's construction with an example.  Let $K_4$ be the complete graph on 4 vertices, with all edge lengths 1, and choose the lower left vertex to be $p$ in the construction above.
The 8 cycles on $K_4$ can be partitioned into 3 types -- the zero cycle, 4 ``triangles'', and three ``squares''.  These cycles, together with the corresponding theta characteristics, are pictured in Figure \ref{Fig:Theta}.  One can see by inspection that 7 of these theta characteristics are effective.

\begin{figure}[h]
\begin{tikzpicture}

\draw [ball color=black] (0,0) circle (0.55mm);
\draw [ball color=black] (4,0) circle (0.55mm);
\draw [ball color=black] (2,3.46) circle (0.55mm);
\draw [ball color=black] (2,1.73) circle (0.55mm);
\draw (0,0)--(4,0);
\draw (0,0)--(2,3.46);
\draw (0,0)--(2,1.73);
\draw (4,0)--(2,3.46);
\draw (4,0)--(2,1.73);
\draw (2,3.46)--(2,1.73);

\draw [ball color=black] (3,1.73) circle (0.55mm);
\draw [ball color=black] (3,0.86) circle (0.55mm);
\draw [ball color=black] (2,2.59) circle (0.55mm);
\draw (-0.3,-0.3) node {\footnotesize $-1$};
\draw [ above left] (0,0) node {\footnotesize $p$};
\draw (3.3,0.86) node {\footnotesize $1$};
\draw (2.3,2.59) node {\footnotesize $1$};
\draw (3.3,1.73) node {\footnotesize $1$};

\draw [ball color=black] (8,0) circle (0.55mm);
\draw [ball color=black] (12,0) circle (0.55mm);
\draw [ball color=black] (10,3.46) circle (0.55mm);
\draw [ball color=black] (10,1.73) circle (0.55mm);
\draw [dashed] (8,0)--(12,0);
\draw [dashed] (8,0)--(10,3.46);
\draw [dashed] (8,0)--(10,1.73);
\draw (12,0)--(10,3.46);
\draw (12,0)--(10,1.73);
\draw (10,3.46)--(10,1.73);

\draw (7.7,-0.3) node {\footnotesize $2$};

\draw [ball color=black] (4,-5) circle (0.55mm);
\draw [ball color=black] (8,-5) circle (0.55mm);
\draw [ball color=black] (6,-1.54) circle (0.55mm);
\draw [ball color=black] (6,-3.27) circle (0.55mm);
\draw [dashed] (4,-5)--(8,-5);
\draw (4,-5)--(6,-1.54);
\draw (4,-5)--(6,-3.27);
\draw (8,-5)--(6,-1.54);
\draw (8,-5)--(6,-3.27);
\draw [dashed] (6,-1.54)--(6,-3.27);

\draw [ball color=black] (6,-5) circle (0.55mm);
\draw [ball color=black] (6,-2.4) circle (0.55mm);
\draw (6,-5.3) node {\footnotesize $1$};
\draw (6.3,-2.4) node {\footnotesize $1$};

\end{tikzpicture}
\caption{The non-effective and two effective theta characteristics on $K_4$.}
\label{Fig:Theta}
\end{figure}
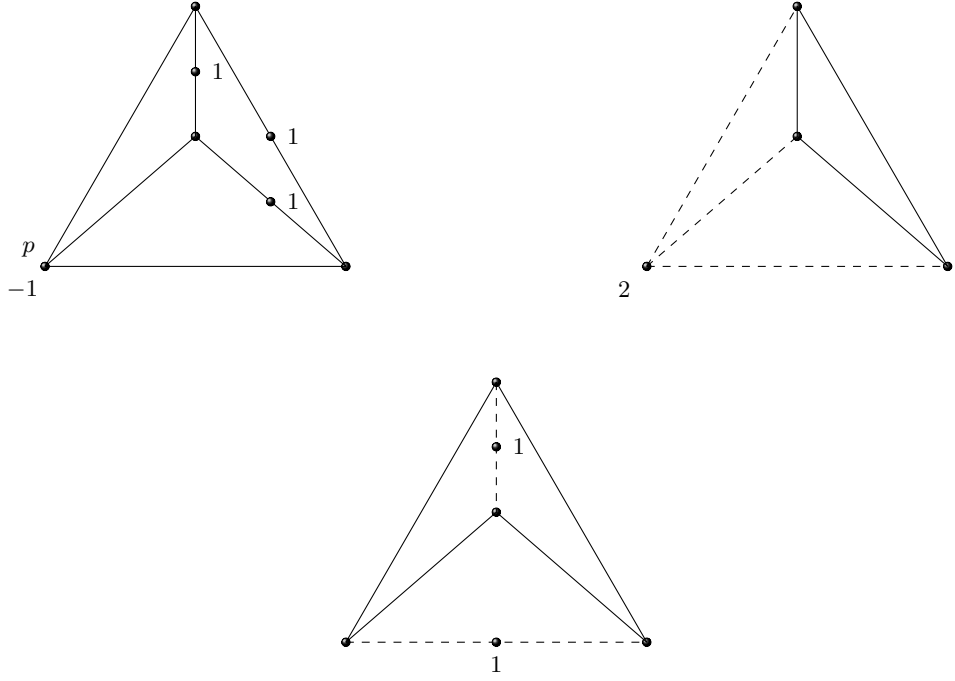

\end{example}

For the remainder of the paper, we let $\Lambda$ be the kernel of the specialization map $\Trop : \Jac_2 (X) \to \Jac_2 (\Gamma)$.  By Theorem \ref{Thm:Surjective}, there is an exact sequence
\[
0 \to \Lambda \to \Jac_2 (X) \to \Jac_2 (\Gamma) \to 0 .
\]
Thus, $\Lambda$ is a $g$-dimensional vector space over $\Z/2\Z$.  

The following is an immediate consequence of Baker's Specialization Lemma (see \cite[Lemma 2.8]{Baker08} or \cite[Theorem 1.1]{AminiBaker15}).

\begin{lemma}
\label{Lem:NE}
The preimage of $L_0$ under the specialization map $\Trop: \Theta (X) \to \Theta (\Gamma)$ consists of $2^g$ non-effective theta characteristics.
\end{lemma}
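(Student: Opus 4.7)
The plan is to combine a simple counting argument with Baker's Specialization Lemma. First I would observe that since $\Theta(X)$ is a torsor for $\Jac_2(X)$ and $\Theta(\Gamma)$ is a torsor for $\Jac_2(\Gamma)$, and the specialization map $\Trop \colon \Jac_2(X)\to\Jac_2(\Gamma)$ is equivariant with respect to these torsor structures, each nonempty fiber of $\Trop \colon \Theta(X)\to\Theta(\Gamma)$ is a torsor for $\Lambda = \ker\big(\Trop\colon\Jac_2(X)\to\Jac_2(\Gamma)\big)$. Corollary \ref{Cor:SurjTheta} guarantees surjectivity, so each fiber is nonempty, and the short exact sequence just before this lemma shows $|\Lambda| = 2^g$. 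Hence the preimage of $L_0$ contains exactly $2^g$ theta characteristics; what remains is to show each of them is non-effective.

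For the non-effectivity, I would invoke Baker's Specialization Lemma, which says that for any divisor $\cD$ on $X$ one has $r(\cD)\leq r(\Trop(\cD))$. Recall from Zharkov's construction, as recounted in the discussion preceding the lemma, that $L_0$ is $v$-reduced and non-effective, hence has rank $-1$. Thus any $\cL\in\Theta(X)$ with $\Trop(\cL)=L_0$ satisfies $r(\cL)\leq -1$, which forces $h^0(X,\cL) = 0$; in other words, $\cL$ is non-effective. This gives the lemma.

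There really isn't a substantial obstacle here: the argument is an immediate combination of the torsor-counting observation with the Specialization Lemma. The one subtlety worth emphasizing is the torsor compatibility, namely that the fibers of $\Trop\colon\Theta(X)\to\Theta(\Gamma)$ are precisely the $\Lambda$-orbits, which is where the count $2^g$ comes from; everything else is automatic. As a bonus, since $h^0(X,\cL) = 0$ is even, these $2^g$ theta characteristics are in fact all even, matching the parenthetical strengthening in part (2) of Theorem \ref{Thm:MainThm}.
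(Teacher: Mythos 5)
Your proof is correct and follows essentially the same route as the paper: identify the fiber over $L_0$ as a coset of $\Lambda$ (hence of size $2^g$), then apply Baker's Specialization Lemma together with the fact that $L_0$ has rank $-1$ to conclude non-effectivity. The only cosmetic difference is that you phrase the counting step in terms of torsor equivariance while the paper simply writes the fiber as $\cL + \Lambda$.
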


\begin{proof}
By Corollary \ref{Cor:SurjTheta}, there exists $\cL \in \Theta (X)$ such that $\Trop (\cL) = L_0$.  The preimage of $L_{\gamma}$ is then precisely $\cL + \Lambda$, which has order $2^g$.  By Baker's Specialization Lemma, $rk(\cL) \leq rk(\Trop(\cL))$.  But $\Trop(\cL) = L_0$ has rank -1, so $\cL$ has rank -1 as well.
\end{proof}

\begin{proposition}
\label{Prop:Isotropic}
The subspace $\Lambda \in \Jac_2 (X)$ is isotropic for the Weil pairing.
\end{proposition}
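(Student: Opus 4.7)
The plan is to exploit Lemma \ref{Lem:NE} together with the definition of the Weil pairing via the quadratic form $q_{\cL}$. The key observation is that if we pick a theta characteristic $\cL \in \Theta(X)$ whose specialization is the distinguished non-effective class $L_0 \in \Theta(\Gamma)$, then every translate $\cL + \cD$ by an element $\cD \in \Lambda$ still specializes to $L_0$, and hence by Lemma \ref{Lem:NE} is also non-effective.

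More precisely, first I would use Corollary \ref{Cor:SurjTheta} to choose $\cL \in \Theta(X)$ with $\Trop(\cL) = L_0$. For any $\cD \in \Lambda$, the translate $\cL + \cD$ lies in the preimage $\Trop^{-1}(L_0) = \cL + \Lambda$, which by Lemma \ref{Lem:NE} consists entirely of non-effective theta characteristics. Consequently, $h^0(X, \cL) = 0$ and $h^0(X, \cL + \cD) = 0$, so
\[
q_{\cL}(\cD) = h^0(X,\cL) + h^0(X,\cL+\cD) \equiv 0 \pmod{2}
\]
for every $\cD \in \Lambda$. In other words, $q_{\cL}$ vanishes identically on $\Lambda$.

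Finally, since the Weil pairing is by definition the symmetric bilinear form associated to $q_{\cL}$, for any $\cD, \cE \in \Lambda$ we also have $\cD + \cE \in \Lambda$, and so
\[
\lambda(\cD, \cE) = q_{\cL}(\cD) + q_{\cL}(\cE) + q_{\cL}(\cD + \cE) = 0 + 0 + 0 = 0.
\]
This shows that $\Lambda$ is isotropic for $\lambda$.

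There is essentially no hard step here once Lemma \ref{Lem:NE} is in place: the proposition follows almost formally from the fact that $\Lambda$ is isotropic for the stronger quadratic form $q_{\cL}$, which in turn is immediate from the uniformly non-effective behavior of the coset $\cL + \Lambda$. The only mild subtlety worth flagging is the choice of $\cL$ mapping to $L_0$ (rather than an arbitrary theta characteristic), which is what makes the parity computation trivial; any other choice of base theta characteristic would not a priori give $q_{\cL}(\cD) = 0$ for all $\cD \in \Lambda$.
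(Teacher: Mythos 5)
Your proof is correct and is essentially identical to the paper's: both choose $\cL$ with $\Trop(\cL) = L_0$ via Corollary \ref{Cor:SurjTheta}, invoke Lemma \ref{Lem:NE} to see that $h^0(X,\cL+\cD)=0$ for all $\cD\in\Lambda$, and conclude that each term in $\lambda(\cD,\cE)=q_{\cL}(\cD)+q_{\cL}(\cE)+q_{\cL}(\cD+\cE)$ vanishes. No gaps.
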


\begin{proof}
By Corollary \ref{Cor:SurjTheta}, there exists $\cL \in \Theta (X)$ such that $\Trop (\cL) = L_0$.  For any $\cD \in \Lambda$, by Lemma \ref{Lem:NE} we see that $h^0(X,\cL)=h^0 (X, \cL+\cD) = 0$.  Hence, for any $\cD,\cE \in \Lambda$, we have
\[
\lambda (\cD,\cE) = q_{\cL} (\cD) + q_{\cL} (\cE) + q_{\cL} (\cD+\cE) =
\]
\[
= h^0 (X,\cL+\cD) + h^0 (X,\cL+\cE) + h^0 (X,\cL+\cD+\cE) = 0.
\]
\end{proof}

\begin{corollary}
\label{Cor:Linear}
For any $\cL \in \Theta (X)$, the restriction of $q_{\cL}$ to $\Lambda$ is a linear map.
\end{corollary}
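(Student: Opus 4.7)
The plan is to extract this from Proposition~\ref{Prop:Isotropic} essentially formally, using the characterization of the Weil pairing as the ``polarization'' of $q_{\cL}$. By the very definition given in the excerpt,
\[
\lambda (\cD,\cE) = q_{\cL} (\cD) + q_{\cL} (\cE) + q_{\cL} (\cD+\cE),
\]
and $\lambda$ does not depend on the choice of $\cL$. Thus for any $\cD, \cE \in \Lambda$, Proposition~\ref{Prop:Isotropic} gives $\lambda(\cD,\cE) = 0$, which rearranges to
\[
q_{\cL}(\cD+\cE) = q_{\cL}(\cD) + q_{\cL}(\cE).
\]
This is additivity of the restriction $q_{\cL}|_{\Lambda}$.

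Since $\Lambda$ is a vector space over $\Z/2\Z$ and the only nonzero scalar acts trivially, additivity is equivalent to $\Z/2\Z$-linearity. Combined with the trivial observation $q_{\cL}(0) = 2h^0(X,\cL) \equiv 0 \pmod 2$, this yields that $q_{\cL}|_\Lambda$ is a linear map $\Lambda \to \Z/2\Z$.

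The main point is that all the substantive content has already been absorbed into Proposition~\ref{Prop:Isotropic}; the polarization identity is tailored to convert vanishing of $\lambda$ on $\Lambda \times \Lambda$ directly into additivity of $q_{\cL}$ on $\Lambda$, so there is no real obstacle to overcome. It is worth emphasizing that the argument works uniformly in $\cL \in \Theta(X)$ precisely because $\lambda$ is independent of $\cL$, even though the individual values $q_{\cL}(\cD)$ are not; thus the isotropy statement, which was proved by choosing the particular $\cL$ specializing to $L_0$, automatically forces linearity of every $q_{\cL}$ on $\Lambda$.
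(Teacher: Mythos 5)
Your proposal is correct and follows exactly the paper's own argument: apply Proposition~\ref{Prop:Isotropic} to the defining identity $\lambda(\cD,\cE) = q_{\cL}(\cD) + q_{\cL}(\cE) + q_{\cL}(\cD+\cE)$ and rearrange to get additivity of $q_{\cL}|_\Lambda$. The extra remarks about $q_{\cL}(0)=0$ and the independence of $\lambda$ from $\cL$ are fine but not needed beyond what the paper records.
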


\begin{proof}
Let $\cD,\cE \in \Lambda$.  Then we have
\[
\lambda (\cD,\cE) = q_{\cL} (\cD) + q_{\cL} (\cE) + q_{\cL} (\cD+\cE) .
\]
By Proposition \ref{Prop:Isotropic}, however, the left hand side is zero.  Hence, by rearranging the terms, we obtain
\[
q_{\cL} (\cD) + q_{\cL} (\cE) = q_{\cL} (\cD+\cE) .
\]
\end{proof}

From this we deduce the main result.

\begin{theorem}
\label{Thm:HalfAndHalf}
For any effective class $L_{\gamma} \in \Theta (\Gamma)$, its preimage under the specialization map $\Trop: \Theta (X) \to \Theta (\Gamma)$ consists of $2^{g-1}$ odd theta characteristics and $2^{g-1}$ even theta characteristics.
\end{theorem}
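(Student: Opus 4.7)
The plan is to reduce the theorem to a single linear-algebraic fact about the quadratic form $q_\cL$ restricted to the subspace $\Lambda$. By Corollary \ref{Cor:SurjTheta}, I would start by choosing $\cL \in \Theta(X)$ with $\Trop(\cL) = L_\gamma$; then the entire preimage of $L_\gamma$ is the coset $\cL + \Lambda$, which has $2^g$ elements. The parity of $\cL + \cD$ coincides with the parity of $\cL$ precisely when $q_\cL(\cD) = 0$. Corollary \ref{Cor:Linear} tells us $q_\cL|_\Lambda : \Lambda \to \Z/2\Z$ is linear, so if this linear functional is not identically zero then its kernel has index $2$ in $\Lambda$, producing exactly $2^{g-1}$ theta characteristics of each parity. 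Thus the entire proof reduces to showing that $q_\cL|_\Lambda \not\equiv 0$.

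To access this, I would introduce a non-effective lift as a reference point. Pick $\cL_0 \in \Theta(X)$ with $\Trop(\cL_0) = L_0$ and write $\cL = \cL_0 + \cE$, where $\cE \in \Jac_2(X)$ satisfies $\Trop(\cE) = D_\gamma = L_\gamma - L_0$. By Lemma \ref{Lem:NE}, every element of $\cL_0 + \Lambda$ is non-effective, so $h^0(X, \cL_0 + \cD) = 0$ for all $\cD \in \Lambda$, and in particular $q_{\cL_0}(\cD) = 0$. Since the Weil pairing is independent of the base theta characteristic, expanding it through $\cL_0$ gives
\[
\lambda(\cE, \cD) = q_{\cL_0}(\cE) + q_{\cL_0}(\cD) + q_{\cL_0}(\cE + \cD) = h^0(X, \cL) + h^0(X, \cL + \cD) = q_\cL(\cD) \pmod{2}
\]
for every $\cD \in \Lambda$. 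Hence $q_\cL|_\Lambda$ coincides with the linear functional on $\Lambda$ given by Weil pairing with $\cE$.

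The main step, and the one that powers the theorem, is verifying that this functional is nonzero. Since the Weil pairing on $\Jac_2(X) \cong (\Z/2\Z)^{2g}$ is non-degenerate and $\Lambda$ has dimension $g$, the isotropy of $\Lambda$ from Proposition \ref{Prop:Isotropic} forces $\Lambda = \Lambda^\perp$. Consequently $\lambda(\cE, \cdot)|_\Lambda$ vanishes identically exactly when $\cE \in \Lambda$, i.e.\ when $\Trop(\cE) = 0$ in $\Jac_2(\Gamma)$. But $\Trop(\cE) = D_\gamma \neq 0$, because Zharkov's parametrization (\ref{Eq:Dilation}) sends distinct classes in $H_1(\Gamma, \Z/2\Z)$ to distinct elements $L_\gamma \in \Theta(\Gamma)$, so $L_\gamma \neq L_0$ whenever $\gamma$ is nonzero. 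This contradiction gives $q_\cL|_\Lambda \not\equiv 0$ and the theorem follows.

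I expect the only mild subtleties to be bookkeeping: keeping track of which $h^0$ terms are guaranteed to vanish by Lemma \ref{Lem:NE}, and confirming via a dimension count that the isotropic subspace $\Lambda$ is forced to be its own orthogonal complement under the Weil pairing. Both are immediate from the setup, so there should be no substantive obstacle once the non-effective reference point $\cL_0$ is introduced; the insight driving the proof is really that the failure of $\Trop$ to preserve parity on the coset $\cL + \Lambda$ is governed by a single Weil pairing with the class $\cE$ measuring the difference between $\cL$ and a non-effective lift.
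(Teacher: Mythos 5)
Your proof is correct, but the way you rule out $q_{\cL}|_{\Lambda}\equiv 0$ is genuinely different from the paper's. The paper argues globally: after reducing to the dichotomy ``all even or half-and-half'' via Corollary \ref{Cor:Linear}, it notes that an \emph{odd} $\cL$ has Arf invariant $-1$ and hence admits no $g$-dimensional isotropic subspace, so its fiber must be half-and-half; it then sums the count of odd theta characteristics over all $\gamma$ and compares with the classical total $(2^g-1)2^{g-1}$ to force $m=1$, i.e.\ the all-even case occurs only for $L_0$. Your argument is local to a single $\gamma$: writing $\cL=\cL_0+\cE$ with $\cL_0$ a non-effective lift of $L_0$, you use Lemma \ref{Lem:NE} to kill the terms $h^0(X,\cL_0)$ and $h^0(X,\cL_0+\cD)$ and identify $q_{\cL}|_{\Lambda}$ with the functional $\lambda(\cE,\cdot)|_{\Lambda}$, then invoke non-degeneracy of the Weil pairing to get $\Lambda=\Lambda^{\perp}$ (isotropic of dimension $g$ in a $2g$-dimensional space), so the functional vanishes only if $\cE\in\Lambda$, i.e.\ $D_{\gamma}=0$ --- contradicting $L_{\gamma}\neq L_0$, which holds because $L_{\gamma}$ is effective and $L_0$ has negative rank. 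Your route avoids both the Arf-invariant dichotomy and the global count of odd theta characteristics, and it pinpoints fiberwise \emph{why} the exceptional case is exactly $\gamma=0$; the price is that you lean directly on the non-degeneracy of the Weil pairing, which the paper only uses implicitly (in its Arf-invariant discussion) and never states --- you should cite it explicitly as a classical fact, e.g.\ from \cite[Appendix B]{ACGH}. All the individual computations (the three-term expansion of $\lambda(\cE,\cD)$ through $q_{\cL_0}$, the dimension count $\dim\Lambda^{\perp}=2g-g$, and $D_{\gamma}\neq 0$) check out.
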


\begin{proof}
By Corollary \ref{Cor:SurjTheta}, there exists $\cL \in \Theta (X)$ such that $\Trop (\cL) = L_{\gamma}$.  The preimage of $L_{\gamma}$ is then precisely $\cL + \Lambda$, which has order $2^g$.  By Corollary \ref{Cor:Linear}, the map $q_{\cL} : \Lambda \to \Z/2\Z$ is linear.  The kernel of this map is precisely those divisors $\cD \in \Lambda$ such that $\cL+\cD$ has the same parity as $\cL$.  Thus, if the map is trivial, then all theta characteristics in the preimage of $L_{\gamma}$ have the same parity as $\cL$, whereas if the map is surjective, then exactly half of the theta characteristics in the preimage of $L_{\gamma}$ have the same parity as $\cL$.

If $\cL$ is odd, however, then $q_{\cL}$ has Arf invariant -1, and therefore does not have any isotropic subspaces of dimension $g$.  In particular, the restriction of $q_{\cL}$ to $\Lambda$ cannot be trivial.  It follows that the preimage of $L_{\gamma}$ consists either entirely of even theta characteristics, or half even and half odd.  Recall by Lemma~\ref{Lem:NE} that in the case $\gamma = 0$, the preimage consists entirely of even theta characteristics.  Summing over all $\gamma$, we see that the number of odd theta characteristics on $X$ is $(2^g-m)2^{g-1}$, where $m$ is the number of cycles $\gamma \in H_1 (\Gamma , \Z/2\Z)$ such that the preimage of $L_{\gamma}$ contains only even theta characteristics.  It is well known, however, that an algebraic curve possesses exactly $(2^g-1)2^{g-1}$ odd theta characteristics, hence $m=1$.
\end{proof}

\begin{example}
\label{Ex:Bitangents}
Returning to Example \ref{Ex:Theta}, the 28 odd theta characteristics on a non-hyperelliptic genus 3 curve are the points of intersection of the bitangent lines to the curve in its canonical embedding.  By Theorem \ref{Thm:HalfAndHalf}, each of the 7 effective theta characteristics on $K_4$ pictured in Figure \ref{Fig:Theta} is the specialization of exactly 4 odd theta characteristics on $X$.  In this example, the 7 effective theta characteristics on $K_4$ are rigid in the sense that they each have only one effective divisor in their class, so in fact the points of intersection of the 28 bitangents specialize 4-to-1 to precisely these \emph{divisors} on $K_4$, not just these divisor \emph{classes}.
\end{example}

\section{Unramified double Covers}
\label{Sec:Covers}

Recall that there is a one-to-one correspondence between unramified double covers of an algebraic curve $X$ and 2-torsion points in $\Jac (X)$.  We now explore the relationship between unramified double covers of a metric graph and 2-torsion points in its Jacobian.  We first review the necessary material on augmented metric graphs and harmonic morphisms.  For a more detailed discussion of harmonic morphisms, see \cite{ABBR1}.

\begin{definition}
An \emph{augmented metric graph} is a metric graph $\Gamma$, together with a function $g : \Gamma \to \Z_{\geq 0}$, called the \emph{genus function}, such that $g(x) = 0$ for all but finitely many $x \in \Gamma$.
\end{definition}

We often neglect the weight function if it is clear from context.
Any metric graph may be seen as an augmented graph for which the genus function is identically zero.  To emphasize that,  we refer to them as \emph{unaugmented} metric graphs.
To an augmented metric graph $(\Gamma, g)$ we associate an unaugmented metric graph $\Gamma^{\#}$ by adding $g(x)$ loops of  length $\epsilon>0$ based at $x$ for each point $x \in \Gamma$, and forgetting the weight. The Jacobian of  $(\Gamma, g)$ is, by definition,  the Jacobian of $\Gamma^{\#}$. Note that none of our results below depends on the lengths of these loops.


\begin{definition}
A continuous map $\varphi : \widetilde{\Gamma} \to \Gamma$ of metric graphs is called a \emph{morphism} if there exist vertex sets $\widetilde{V} \subset \widetilde{\Gamma}, V \subset \Gamma$, such that $\varphi (\widetilde{V}) \subseteq V$, $\varphi^{-1} E(\Gamma) \subseteq E(\widetilde{\Gamma})$, and the restriction of $\varphi$ to any edge $\tilde{e}$ of $\widetilde{\Gamma}$ is a  dilation by some factor $d_{\tilde{e}} (\varphi) \in \Z_{\geq 0}$.  A morphism is called \emph{finite} if $d_{\tilde{e}} (\varphi) > 0$ for all edges $\tilde{e}$.
\end{definition}

The tropical analogue of a map between algebraic curves is not just a morphism, but a harmonic morphism. For a point $x$ of a metric graph $\Gamma$, we denote $T_x(\Gamma)$ the collection of its tangent directions (which is in bijection with the edges emanating from $x$).

\begin{definition} \label{Def:harmonic}
A finite morphism of metric graphs is called \emph{harmonic} at $\tilde{x} \in \widetilde{\Gamma}$ if the sum
\[
d_{\tilde{x}} (\varphi) := \sum_{\tilde{v} \in T_{\tilde{x}} (\widetilde{\Gamma}), \varphi (\tilde{v}) = v } d_{\tilde{v}} (\varphi)
\]
is independent of the choice of tangent vector $v \in T_{\varphi ({\tilde{x}})} (\Gamma)$.  The number $d_{\tilde{x}} (\varphi)$ is called the \emph{degree} of the harmonic morphism at $\tilde{x}$.
The morphism is called \emph{harmonic} if it is surjective and harmonic at every point $\tilde{x} \in \widetilde{\Gamma}$.  In this case, the number $\deg(\varphi) = \sum_{\varphi (\tilde{x})= x} d_{\tilde{x}}(\varphi)$ is independent of $x$, and is called the \emph{degree} of the harmonic morphism $\varphi$.
\end{definition}

Our primary interest here is in unramified harmonic morphisms.  Let $(\widetilde{\Gamma},\tilde{g})$ and $(\Gamma,g)$ be augmented metric graphs.
Given a point $\tilde{x} \in \widetilde{\Gamma}$, the ramification of $\varphi$ at $\tilde{x}$ is
\[
R_{\tilde{x}} = (2-2\tilde{g}(\tilde{x})) - d_{\tilde{x}} (\varphi) (2-2g(\varphi(\tilde{x}))) - \sum_{\tilde{v} \in T_{\tilde{x}} (\widetilde{\Gamma})} (d_{\tilde{v}} (\varphi) -1),
\]
or equivalently, $R_{\tilde{x}} = K_{\widetilde{\Gamma}}(\tilde{x}) - \varphi^*(K_\Gamma)(\tilde{x})$, where $K_{\widetilde{\Gamma}}$ and $K_\Gamma$ are the canonical divisors of $\widetilde{\Gamma}$ and $\Gamma$ respectively.
We say that the harmonic morphism $\varphi$ is \emph{unramified} if $R_{\tilde{x}} = 0$ for all $\tilde{x} \in \widetilde{\Gamma}$.

We often refer to an unramified harmonic morphism of degree 2 as an \emph{unramified double cover}. Throughout this section, we are only interested in such covers in which the target is unaugmented. However, we cannot rule out the possibility that the genus function of the covering graph $\widetilde{\Gamma}$ is non-trivial.
Given an unramified double cover $\varphi : \widetilde{\Gamma} \to \Gamma$, we define the \emph{dilation cycle} $\gamma (\varphi)$ to be the set of points of $\Gamma$ in the image of a dilated edge.  More precisely,
\[
\gamma (\varphi) : = \{ x \in \Gamma \vert \exists \tilde{x} \in \widetilde{\Gamma}, \tilde{v} \in T_{\tilde{x}} (\widetilde{\Gamma}) \text{ with } \varphi (\tilde{x}) = x \text{ and } d_{\tilde{v}} (\varphi) = 2 \}.
\]
We now describe the genus function of an unramified double cover.

\begin{lemma}
\label{Lem:Genus}
Let $\varphi : \widetilde{\Gamma} \to \Gamma$ be an unramified double cover, with $\Gamma$  unaugmented.
\begin{enumerate}
\item  If $x \notin \gamma (\varphi)$, then $\varphi^{-1} (x)$ consists of two points of genus 0.
\item  If $x \in \gamma (\varphi)$, then $\varphi^{-1} (x)$ is a single point of genus $\frac{1}{2} \deg_{\gamma (\varphi)} (x) - 1$, where $\deg_{\gamma (\varphi)} (x)$ is the number of tangent vectors at $x$ contained in $\gamma (\varphi)$.
\end{enumerate}
\end{lemma}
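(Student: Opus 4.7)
The plan is to read off the fiber $\varphi^{-1}(x)$ and the weight at each preimage by combining three pieces of data: the degree constraint $\sum_{\tilde{x} \in \varphi^{-1}(x)} d_{\tilde{x}}(\varphi) = 2$, harmonicity $\sum_{\tilde{v} \mapsto v} d_{\tilde{v}}(\varphi) = d_{\tilde{x}}(\varphi)$ for every $v \in T_{\varphi(\tilde{x})}(\Gamma)$, and the unramified condition in the form $K_{\widetilde{\Gamma}}(\tilde{x}) = \varphi^{*}(K_{\Gamma})(\tilde{x})$. Since $\deg(\varphi) = 2$, the fiber over $x$ is a priori either two points of local degree $1$ or a single point of local degree $2$; once this is decided, harmonicity recovers $\deg_{\widetilde{\Gamma}}(\tilde{x})$ in terms of the dilation pattern around $x$, and unramifiedness then solves for $\tilde{g}(\tilde{x})$.

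For part (1) suppose $x \notin \gamma(\varphi)$, so every tangent vector above $x$ has dilation $1$. In the two-preimage case, harmonicity puts exactly one preimage tangent vector over each $v \in T_x(\Gamma)$, so $\deg_{\widetilde{\Gamma}}(\tilde{x}) = \deg_{\Gamma}(x)$; the equation $2\tilde{g}(\tilde{x}) - 2 + \deg_{\Gamma}(x) = 1\cdot(-2 + \deg_{\Gamma}(x))$ then forces $\tilde{g}(\tilde{x}) = 0$. The single-preimage case must be excluded: if $\tilde{x}$ were the unique preimage with $d_{\tilde{x}} = 2$, harmonicity together with the absence of dilated tangent vectors would give two preimage tangent vectors above every $v \in T_x$, so $\deg_{\widetilde{\Gamma}}(\tilde{x}) = 2\deg_{\Gamma}(x)$, and the unramified condition would reduce to $2\tilde{g}(\tilde{x}) - 2 + 2\deg_{\Gamma}(x) = 2(-2 + \deg_{\Gamma}(x))$, i.e.\ $\tilde{g}(\tilde{x}) = -1$, which is impossible.

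For part (2), since $x \in \gamma(\varphi)$ there is a preimage carrying a tangent vector with $d_{\tilde{v}} = 2$, so its local degree is at least $2$, hence exactly $2$, making it the unique preimage $\tilde{x}$. At each $v \in T_{x}(\Gamma)$, the relation $\sum_{\tilde{v} \mapsto v} d_{\tilde{v}}(\varphi) = 2$ forces either one dilated preimage tangent vector (which is exactly the case when $v$ points into $\gamma(\varphi)$) or two undilated preimages. Writing $k = \deg_{\gamma(\varphi)}(x)$, this gives $\deg_{\widetilde{\Gamma}}(\tilde{x}) = k + 2(\deg_{\Gamma}(x) - k) = 2\deg_{\Gamma}(x) - k$, so the unramified condition
\[
2\tilde{g}(\tilde{x}) - 2 + 2\deg_{\Gamma}(x) - k \;=\; 2\bigl(-2 + \deg_{\Gamma}(x)\bigr)
\]
yields $\tilde{g}(\tilde{x}) = \tfrac{1}{2}k - 1$, as desired.

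The only subtle point is the exclusion of the single-preimage scenario in part (1): without it, one might worry that an undilated fiber could nevertheless collapse into a single vertex of negative weight. Everything else is a direct substitution once the tangent vectors at $\tilde{x}$ are partitioned into dilated and undilated types according to whether the corresponding direction at $x$ lies in $\gamma(\varphi)$. As a sanity check, the formula $\tilde{g}(\tilde{x}) = \tfrac{1}{2}k - 1 \geq 0$ recovers the expected fact that $k \geq 2$ and is even at every point of $\gamma(\varphi)$, reflecting that $\gamma(\varphi)$ is a disjoint union of cycles in $\Gamma$.
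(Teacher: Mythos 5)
Your proof is correct and follows essentially the same route as the paper: apply the local unramifiedness condition $K_{\widetilde{\Gamma}}(\tilde{x}) = \varphi^*(K_\Gamma)(\tilde{x})$ together with harmonicity to pin down the fiber structure and solve for the genus, with the key observation that a degree-2 point over an undilated $x$ would force $\tilde{g}(\tilde{x}) = -1$. Your write-up is if anything slightly more explicit than the paper's in partitioning the tangent directions at $x$ into dilated and undilated types, but the substance is identical.
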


\begin{proof}
Since $\varphi$ is unramified and the genus function of $\Gamma$ is trivial, for any point $\tilde{x} \in \widetilde{\Gamma}$ the ramification formula gives us
\[
2g(\tilde{x})+2 = \sum_{\tilde{v} \in T_{\tilde{x}} (\widetilde{\Gamma})} (d_{\tilde{v}} (\varphi) -1)
\]
if $d_{\tilde{x}} = 2$, and
\[
2g(\tilde{x}) = \sum_{\tilde{v} \in T_{\tilde{x}} (\widetilde{\Gamma})} (d_{\tilde{v}} (\varphi) -1)
\]
if $d_{\tilde{x}} = 1$.

Note that if $d_{\tilde{x}} = 2$ and $\varphi (\tilde{x})$ is not in the dilation cycle, then this implies that $g(\tilde{x}) < 0$, which is impossible.  It follows that $d_{\tilde{x}} = 2$ if and only if $\varphi (\tilde{x})$ is in the dilation cycle.  As $\varphi$ is finite and degree 2, each dilation factor $d_{\tilde{v}} (\varphi)$ is either 1 or 2. The right hand side of the expression above is therefore equal to $\deg_{\gamma (\varphi)} (x)$, and the result follows.
\end{proof}

The following corollary explains our choice of terminology.

\begin{corollary}
The dilation cycle $\gamma (\varphi)$ is a cycle in $\Gamma$.
\end{corollary}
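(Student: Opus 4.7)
The plan is to show that at every point $x \in \gamma(\varphi)$, the number of tangent directions belonging to $\gamma(\varphi)$ is even. This is precisely the combinatorial/homological condition for a subgraph to be a $1$-cycle: the corresponding $1$-chain is closed, i.e. has zero boundary when considered mod $2$. At a point in the relative interior of a dilated edge, this is automatic (there are exactly two tangent directions in $\gamma(\varphi)$), so the content is really at the vertices.

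The key step, which is almost immediate from the preceding lemma, is the following. For a point $x \in \gamma(\varphi)$, Lemma~\ref{Lem:Genus}(2) states that its unique preimage $\tilde{x}$ has genus
\[
g(\tilde{x}) = \tfrac{1}{2}\deg_{\gamma(\varphi)}(x) - 1.
\]
Since $g(\tilde{x})$ is a non-negative integer by the definition of an augmented metric graph, $\deg_{\gamma(\varphi)}(x)$ must be an even integer which is at least $2$. Thus every point of $\gamma(\varphi)$ has even valence in the subgraph $\gamma(\varphi)$, which is exactly the statement that $\gamma(\varphi)$ is a cycle.

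There is no substantive obstacle here; the whole argument is essentially a parity observation extracted from the genus formula in Lemma~\ref{Lem:Genus}. The only point that could potentially require clarification is the convention of what ``cycle'' means (a closed $1$-chain, equivalently a subgraph in which every vertex has even valence, representing a class in $H_1(\Gamma, \mathbb{Z}/2\mathbb{Z})$), but once this is fixed the verification reduces to the single displayed equation above.
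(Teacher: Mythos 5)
Your proof is correct and follows the same route as the paper: both reduce the claim to the parity of $\deg_{\gamma(\varphi)}(x)$ at each point, which the paper extracts directly from Lemma~\ref{Lem:Genus} and you extract (equivalently) from the integrality and non-negativity of the genus $g(\tilde{x}) = \tfrac{1}{2}\deg_{\gamma(\varphi)}(x) - 1$ of the unique preimage. The paper's proof is just a terser statement of the same observation.
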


\begin{proof}
By Lemma \ref{Lem:Genus}, for every point $x \in \Gamma$ there is an even number of tangent vectors in $T_x (\Gamma)$ that are contained in $\gamma (\varphi)$.  It follows that $\gamma (\varphi)$ is a cycle.
\end{proof}

\begin{remark}
In many applications, one is primarily interested in trivalent metric graphs, since these form an open dense subset of the moduli space of tropical curves.  By Lemma \ref{Lem:Genus}, we see that if $\Gamma$ is trivalent, then the genus function on $\tilde{\Gamma}$ is identically zero.  In this case, there is no ambiguity with how one defines the Jacobian of the cover $\widetilde{\Gamma}$.
\end{remark}

Our first result on double covers concerns the kernel of the pullback map on divisors.  Recall that the Jacobian of an augmented graph is defined to be the Jacobian of the unaugmented graph obtained by introducing $g(x)$ loops of length $\epsilon$ based at $x$ for every point $x$ in the graph.  Note that, if $\varphi : \widetilde{\Gamma} \to \Gamma$ is a harmonic morphism with $\Gamma$ unaugmented, then the pullback and pushforward maps on divisors are well-defined.  Specifically, points in the small loops of $\widetilde{\Gamma}^{\#}$ pushforward to the same points as the basepoint of the loop, and points in $\Gamma$ always pullback to the basepoints of the small loops.

\begin{proposition}
\label{Prop:Kernel}
Let $\varphi : \widetilde{\Gamma} \to \Gamma$ an unramified double cover with $\Gamma$ unaugmented.
Then
\[
\ker \varphi^* = \{ D_{\gamma} \mid \gamma\subseteq\gamma(\varphi)\},
\]
where $D_\gamma$ is the divisor associated to the cycle $\gamma$ as defined in \eqref{Eq:Dilation}.
\end{proposition}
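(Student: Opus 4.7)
The plan is to prove both inclusions.

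For $\supseteq$, I show $\varphi^* D_\gamma = 0$ for every cycle $\gamma \subseteq \gamma(\varphi)$. Since $\gamma$ lies in the dilation cycle, its preimage $\widetilde{\gamma} := \varphi^{-1}(\gamma)$ is a single cycle in $\wG$ on which each edge has dilation factor $2$. Zharkov's identity gives $2L_\gamma = K_\Gamma + \ddiv(d_\gamma)$, so pulling back and using unramifiedness (which gives $\varphi^* K_\Gamma = K_{\wG}$) yields $2\varphi^*L_\gamma = K_{\wG} + \ddiv(\varphi^* d_\gamma)$, showing that $\varphi^* L_\gamma$ is itself a theta characteristic on $\wG$, and similarly for $\varphi^* L_0$. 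The goal is then to build a PL function $\widetilde h$ on $\wG$ with $\ddiv(\widetilde h) = \varphi^*(L_\gamma - L_0)$. The natural candidate comes from a distance-type function on $\widetilde{\gamma}$ extended by constants on the sheets over $\Gamma \smallsetminus \gamma(\varphi)$; the key point is that on dilated edges the chain rule automatically doubles slopes (so integrality of $\widetilde h$ is built in), while on the non-dilated regions one uses the symmetry between the two sheets under the deck involution $\sigma$.

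For $\subseteq$, first observe via the push-pull formula $\varphi_* \varphi^* = [2]$ on $\Jac(\Gamma)$ that $\ker \varphi^* \subseteq \Jac_2(\Gamma)$: if $\varphi^* \cD = 0$ then $2\cD = \varphi_* \varphi^* \cD = 0$. Hence every kernel element has the form $D_\gamma$ for a unique $\gamma \in H_1(\Gamma, \Z/2\Z)$. The set $\{D_\gamma : \gamma \subseteq \gamma(\varphi)\}$ has order $2^{b_1(\gamma(\varphi))}$, where $b_1(\gamma(\varphi))$ is the first Betti number of the dilation cycle viewed as a subgraph of $\Gamma$. A separate count of $|\ker \varphi^*|$ — coming from the decomposition of $\Jac(\wG)$ into the Prym part and the $\sigma$-invariant part (the latter being the image of $\varphi^* \Jac(\Gamma)$) — yields $|\ker \varphi^*| \leq 2^{b_1(\gamma(\varphi))}$. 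Combined with $\supseteq$, this forces the set-theoretic equality.

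The main obstacle is the explicit construction of $\widetilde h$ in the forward direction: tracking the slopes of $\widetilde h$ globally on $\wG$, balancing the doubled contributions from dilated edges against the single contributions from non-dilated edges at every branch point of the cover. Once the forward direction is established, the $\subseteq$ direction reduces to the combinatorial count above.
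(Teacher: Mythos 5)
There is a genuine gap, and it sits exactly where you flag it: you defer ``the explicit construction of $\widetilde h$'' as the main obstacle, but that construction \emph{is} the content of the forward inclusion, and your candidate (``a distance-type function on $\varphi^{-1}(\gamma)$ extended by constants on the sheets over $\Gamma\smallsetminus\gamma(\varphi)$'') is not the right function -- $d_\gamma-d_p$ has slopes $0,\pm2$ off of $\gamma$, so no extension by constants will produce the correct divisor on the undilated part. The observation you need, and which the paper uses, is that there is no global balancing to do at all: if $\ddiv(f)=\varphi^*D_\gamma$, then $\ddiv(2f)=\varphi^*\ddiv(d_\gamma-d_p)=\ddiv\bigl(\varphi^*(d_\gamma-d_p)\bigr)$, so $f$ is forced to be $\tfrac12\varphi^*(d_\gamma-d_p)$ up to an additive constant. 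Hence $\varphi^*D_\gamma$ is principal \emph{if and only if} every slope of $\varphi^*(d_\gamma-d_p)$ is even. Since $d_\gamma-d_p$ has odd slope precisely along the tangent directions in $\gamma$, and pulling back multiplies slopes by the dilation factors (which equal $2$ exactly over $\gamma(\varphi)$ and $1$ elsewhere), the slopes of $\varphi^*(d_\gamma-d_p)$ are odd precisely over $\gamma\smallsetminus\gamma(\varphi)$. This single parity computation gives both inclusions at once; your ``chain rule doubles slopes'' remark is the right germ, but you stop short of turning it into the if-and-only-if criterion.

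The reverse inclusion in your write-up is also not established. The reduction $\ker\varphi^*\subseteq\Jac_2(\Gamma)$ via $\varphi_*\varphi^*=[2]$ matches the paper and is fine, and the count $\lvert\{D_\gamma:\gamma\subseteq\gamma(\varphi)\}\rvert=2^{b_1(\gamma(\varphi))}$ is correct. But the asserted upper bound $\lvert\ker\varphi^*\rvert\le 2^{b_1(\gamma(\varphi))}$ is exactly the nontrivial half of the statement, and ``the decomposition of $\Jac(\wG)$ into the Prym part and the $\sigma$-invariant part'' does not yield it without further work: computing $\ker\varphi^*$ from the uniformization $\Jac(\Gamma)\cong\Omega(\Gamma)^*/H_1(\Gamma,\ZZ)$ requires identifying which rational homology classes of $\Gamma$ transfer to integral classes of $\wG$, and that computation is essentially the same slope/dilation parity analysis in homological clothing. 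As written, the $\subseteq$ direction rests on an unproven counting claim, so the proof is incomplete in both directions; the single observation above repairs both simultaneously.
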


\begin{proof}
First, note that the composition $\varphi_* \varphi^*$ is simply multiplication by 2, so the kernel of $\varphi^*$ must be contained in the set of 2-torsion points. Let $d_{p}$ and $d_{\gamma}$ be as defined in the beginning of \S \ref{Sec:Theta}.

As a first step, observe that for a cycle $\gamma$, the divisor $\varphi^* D_{\gamma}$ is principal if and only if the slope of the piecewise linear function $\varphi^* (d_{\gamma} - d_{p})$ is even at every tangent vector in $\widetilde{\Gamma}$.  Indeed, assume that the pullback $\varphi^*(D_\gamma)$ is the principal divisor associated to a piecewise linear function $f$. By the fact that $2D_\gamma = \ddiv (d_{\gamma} - d_{p})$, we have
 $\ddiv (\varphi^* (d_{\gamma} - d_{p})) = \ddiv (2f)$.  It follows that $2f-(d_{\gamma} - d_{p})$ must be a constant.  In other words, up to translation by a constant, we have
\[
 f = \frac{1}{2} \varphi^* (d_{\gamma} - d_{p}),
\]
which is well-defined if and only if all of the slopes of $\varphi^* (d_{\gamma} - d_{p})$ are even.

Now, since $d_p$ has odd slopes at every tangent vector, and the slope of  $d_\gamma$ is even precisely on tangent vectors in $\gamma$, we see that the slope of $d_{\gamma} - d_{p}$ is \emph{odd} preciesly at  tangent vectors in $\gamma$.
Pulling back via $\varphi^*$, the slope is rescaled by $d_{v}$, so  the slope of $\varphi^* (d_{\gamma} - d_{p})$ at a tangent vector $v$ in $\widetilde{\Gamma}$  is odd preciesly at tangent vectors in $\varphi^{-1}(\gamma \smallsetminus \gamma(\varphi))$. We conclude that the slopes of $\varphi^* (d_{\gamma} - d_{p})$ are everywhere even (and therefore $D_\gamma$ is principal) if and only if $\gamma \subseteq \gamma (\varphi)$.
\end{proof}

Note in particular that there exist unramified double covers $\varphi : \widetilde{\Gamma} \to \Gamma$ for which $\varphi^*$ is injective.  These are precisely the covers for which there is no dilation, that is $\gamma (\varphi) = 0$.  In other words, these are the covering spaces of $\Gamma$ of degree $2$.  A graph of genus $g$ admits exactly $2^g$ covering spaces of degree 2, which can be described as follows.  Choose a spanning tree $T$ of $G$.  Any degree 2 covering space will contain a disjoint union $T \sqcup T'$ of two copies of $T$.  Now, for each of the $g$ edges in the complement of $T$, there are 2 possible ways to lift it to the double cover.  If the edge connects vertices $v_1$ to $v_2$, we may lift it to two edges connecting $v_1$ to $v_2$ and $v_1'$ to $v_2'$, or to two edges connecting $v_1$ to $v_2'$ and $v_1'$ to $v_2$.  To see that this is the complete set of degree 2 covering spaces of $\Gamma$, note that the fundamental group of $\Gamma$ is invariant under homotopy, and thus such covering spaces are in bijection with degree 2 covering spaces of the rose obtained by contracting $T$.  An exposition of this can be found in \cite{Waller76}.


For an unramified double cover we have the following.

\begin{lemma}
\label{Lem:harmonicCovers}
Let $\gamma$ be a cycle in an unaugmented metric graph $\Gamma$, and let $h$ be the sum of the genera of the connected components of $\Gamma \smallsetminus \gamma$.  Then up to isomorphism, there are $2^h$ unramified double covers $\varphi : \widetilde{\Gamma}\to\Gamma$ with dilation cycle $\gamma$.
\end{lemma}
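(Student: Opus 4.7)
The plan is to show that any unramified double cover $\varphi:\widetilde{\Gamma}\to\Gamma$ with dilation cycle $\gamma$ decomposes canonically into a ``cycle part'' over $\gamma$ together with an ordinary topological double cover of each connected component of $\Gamma\smallsetminus\gamma$, and then to count these decompositions.

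First I would invoke Lemma~\ref{Lem:Genus}: since every point of $\gamma$ has a single preimage and every tangent direction in $\gamma$ has a unique lift with dilation factor $2$, the preimage $\varphi^{-1}(\gamma)$ is a single cycle $\widetilde{\gamma}$ mapping bijectively onto $\gamma$ with all edges of dilation factor $2$.  Over the complement, every point has two preimages and every dilation factor is $1$, so $\varphi$ restricts over each connected component $C_i$ of $\Gamma\smallsetminus\gamma$ to a genuine (unramified, undilated) topological double cover.  The two pieces are glued along the shared boundary: at each vertex $v\in\gamma$ and each tangent vector at $v$ pointing into some $C_i$, the two preimages in the topological cover of $C_i$ are both attached to the single vertex $\widetilde v\in\widetilde{\gamma}$, and the genus $\widetilde g(\widetilde v)=\tfrac{1}{2}\deg_\gamma(v)-1$ is forced by Lemma~\ref{Lem:Genus}.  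In particular no choice is made in the gluing itself, so $\varphi$ is determined up to isomorphism by the topological covers assigned to the $C_i$.

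Conversely, given any choice of topological double cover for each $C_i$, I would assemble a cover of $\Gamma$ by the above recipe and verify directly that the result is a harmonic morphism of degree $2$, that the unramified condition $K_{\widetilde{\Gamma}}=\varphi^{*}K_{\Gamma}$ holds pointwise (a short calculation using the prescribed genus function), and that the dilation cycle is exactly $\gamma$.  Topological degree-$2$ covers of a component $C_i$ of first Betti number $g(C_i)$ are classified by $\mathrm{Hom}(\pi_1(C_i),\Z/2\Z)\cong (\Z/2\Z)^{g(C_i)}$, giving $2^{g(C_i)}$ isomorphism classes; and since any isomorphism $\widetilde{\Gamma}\to\widetilde{\Gamma}'$ over $\Gamma$ must be the identity on $\widetilde{\gamma}$ (the projection being a bijection) and hence restricts on each $\varphi^{-1}(C_i)$ to an isomorphism of topological double covers of $C_i$, distinct choices yield non-isomorphic covers.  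Multiplying over components gives $\prod_i 2^{g(C_i)}=2^{h}$.

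The main obstacle I expect is the verification in the converse direction---that an arbitrary assembly really produces an unramified harmonic morphism with dilation cycle exactly $\gamma$---since one must track the harmonic balancing at each vertex of $\widetilde{\Gamma}$ while also checking that no tangent vector outside $\gamma$ accidentally carries dilation factor $2$.
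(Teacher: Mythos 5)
Your argument is correct and takes essentially the same route as the paper: the paper's one-sentence proof observes precisely that the preimage of each connected component of $\Gamma \smallsetminus \gamma$ is a degree-$2$ covering space, and the count $\prod_i 2^{g(C_i)} = 2^{h}$ then follows from the classification of such covering spaces discussed just before the lemma. You are merely supplying the gluing and unramifiedness verifications that the paper treats as immediate.
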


\begin{proof}
This follows immediately from the fact that the preimage of each connected component of $\Gamma \smallsetminus \gamma$ is a covering space of degree $2$.
\end{proof}

For the rest of the section, fix a curve $X$ and its skeleton $\Gamma$.
We now consider the relationship between unramified double covers of the graph $\Gamma$ and the curve $X$.  Any unramified double cover $\overline{\varphi} : \widetilde{X} \to X$ descends to an unramified double cover of skeletons.  The following proposition establishes the converse.

\begin{lemma}
\label{Lem:Hurwitz}
Let $\gamma$ be a cycle in an unaugmented metric graph $\Gamma$ and let $h$ be the sum of the genera of the connected components of $\Gamma \smallsetminus \gamma$.  Let $\varphi : \widetilde{\Gamma} \to \Gamma$ be an unramified double cover with dilation cycle $\gamma$.  Then there are $2^{g-h}$ unramified double covers $\overline{\varphi}: \widetilde{X} \to X$ that specialize to $\varphi$.
\end{lemma}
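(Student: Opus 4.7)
The strategy is to combine Theorem \ref{Thm:Covers} with Lemma \ref{Lem:harmonicCovers} and use an action of $\Lambda$ to force all fibers of the specialization-on-covers map to have equal size.  By Theorem \ref{Thm:Covers}, the set of unramified double covers $\overline{\varphi}: \widetilde{X} \to X$ whose specialization has dilation cycle $\gamma$ is in bijection with the coset $T_{\gamma} := \Trop^{-1}(D_{\gamma}) \subseteq \Jac_2(X)$, which has $|\Lambda| = 2^g$ elements.  By Lemma \ref{Lem:harmonicCovers}, there are $2^h$ tropical covers with dilation cycle $\gamma$.  The skeleton map $\Psi : T_{\gamma} \to \{\varphi : \gamma(\varphi) = \gamma\}$ is therefore a map of sets of sizes $2^g$ and $2^h$, and the lemma follows once we show it is surjective with fibers of the same size.

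The key tool is the natural $\Lambda$-action on both sides.  On $T_{\gamma}$, $\Lambda$ acts freely and transitively by translation.  On tropical covers with dilation cycle $\gamma$, the action is built from the identification of $\Lambda$ with the set of degree-$2$ covering spaces of $\Gamma$ discussed in the introduction: a free double cover $\psi$ combines with a cover $\varphi$ of dilation cycle $\gamma$ to give a new cover $\psi \star \varphi$ of the same dilation cycle, via fiber product over $\Gamma$ modulo the diagonal $\Z/2\Z$-subgroup.  On the algebraic side, the analogous construction produces $\overline{\varphi}_{\cD_1 + \cD_2}$ from $\overline{\varphi}_{\cD_1}$ and $\overline{\varphi}_{\cD_2}$, and this construction commutes with specialization, so $\Psi$ is $\Lambda$-equivariant.

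To see that $\Lambda$ acts transitively on the set of tropical covers with dilation cycle $\gamma$, restrict a free cover of $\Gamma$ to each connected component $C_i$ of $\Gamma \smallsetminus \gamma$.  The action factors through the restriction map
\[
H_1(\Gamma, \Z/2\Z) \to \bigoplus_i H_1(C_i, \Z/2\Z),
\]
which is surjective by the long exact sequence of the pair $(\Gamma,\gamma)$.  Since $H_1(C_i, \Z/2\Z)$ acts transitively on the $2^{g_i}$ double covers of $C_i$, the $\Lambda$-action on the set of tropical covers with dilation cycle $\gamma$ is transitive.  The orbit-stabilizer theorem then yields $|\Psi^{-1}(\varphi)| = |\Lambda|/2^h = 2^{g-h}$.

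I expect the main obstacle to be rigorously establishing the compatibility between the combining operation $\star$ on tropical covers and the group law on $\Jac_2(X)$---verifying that the fiber-product-modulo-diagonal construction specializes faithfully from the algebraic to the tropical setting.  An alternative approach avoiding this is to prove surjectivity of $\Psi$ directly, by constructing an algebraic lift of each tropical cover via a suitable semistable model; the counting identity $2^g = 2^h \cdot 2^{g-h}$ together with equal fiber sizes would then force the conclusion.
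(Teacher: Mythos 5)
Your argument is circular. Its very first step invokes Theorem \ref{Thm:Covers} to identify the set of algebraic double covers specializing to dilation cycle $\gamma$ with the coset $\Trop^{-1}(D_\gamma)$ of size $2^g$. But in the paper Theorem \ref{Thm:Covers} is proved \emph{using} Proposition \ref{Prop:Lifting}, which is an immediate consequence of Lemma \ref{Lem:harmonicCovers} together with the very lemma you are asked to prove: the counting input $2^h\cdot 2^{g-h}=2^g$ that makes the induction in Theorem \ref{Thm:Covers} close up is exactly the statement of Lemma \ref{Lem:Hurwitz}. The same objection applies to your use of the identification of $\Lambda$ with the set of degree-$2$ covering spaces of $\Gamma$, which appears in the paper only in the Remark \emph{after} Theorem \ref{Thm:Covers} and depends on it. Without the full strength of Theorem \ref{Thm:Covers} you only have the easy direction (from Proposition \ref{Prop:Kernel}) that $\Trop(\cD)=D_{\gamma'}$ for some $\gamma'\subseteq\gamma(\varphi)$, which is not even enough to make your map $\Psi$ well defined, let alone to count its fibers. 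Your proposed fallback does not escape this: both the total count $2^g$ and the ``equal fiber sizes'' step still rest on the equivariance you derive from Theorem \ref{Thm:Covers}.

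The paper's proof is of an entirely different nature and does not pass through the Jacobian at all: it counts lifts of the tropical cover $\varphi$ as an admissible cover, using the Hurwitz-number formula of \cite{CMR}. On a trivalent model one computes $\vert E(\gamma)\vert = g+m-h-1$ (with $m$ the number of components of $G\smallsetminus\gamma$), all local Hurwitz numbers equal $1$, the cover has $2^m$ automorphisms, and the formula gives $2\cdot 2^{g+m-h-1}/2^m = 2^{g-h}$; a case analysis then shows this count is invariant under contracting non-loop edges, which handles general graphs. If you want to salvage your approach, you would need an independent proof that each tropical double cover with dilation cycle $\gamma$ lifts, and that the total number of algebraic covers over all such tropical covers is $2^g$ --- but that is essentially the content of the lemma, so some genuinely new input (such as the admissible-covers count) is unavoidable. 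Separately, your transitivity step is also imprecise: restriction of covers is a map on $H^1$, not $H_1$ (intersecting a cycle of $\Gamma$ with a component $C_i$ of $\Gamma\smallsetminus\gamma$ need not produce a cycle of $C_i$), though with $\Z/2\Z$ coefficients on a graph this is repairable.
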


\begin{proof}
Let $V$ be the vertex set consisting of all points of $\Gamma$ of valence different from 2.  Let $G$ be the corresponding model, and let $G_1 , \ldots , G_m$ be the connected components of $G \smallsetminus \gamma$.  We first show that the statement holds in the case that $G$ is 3-regular.  In this case, $\gamma$ is a union of simple cycles, and every vertex of $\gamma$ is contained in precisely one of the connected components $G_i$.  Therefore,
\[
\vert V(G) \vert = \sum_{i=1}^m \vert V(G_i) \vert,
\]
and it follows that
\[
\vert E(\gamma) \vert = \vert E(G) \vert - \sum_{i=1}^m \vert E(G_i) \vert
\]
\[
= g + \vert V(G) \vert -1 + \sum_{i=1}^m \left( g(G_i) + \vert V(G_i) \vert -1 \right) = g+m-h-1 .
\]
The cover $\varphi : \widetilde{\Gamma} \to \Gamma$ has $2^m$ automorphisms, given by involution on the preimages of the connected components of $\Gamma \smallsetminus \gamma$.  In this case, the local Hurwitz number at every point $v \in V$ is 1.  Hence, by \cite[Theorem 2]{CMR}, there are
$2\frac{2^{g+m-h-1}}{2^m} = 2^{g-h}$ double covers $\overline{\varphi}: \widetilde{X} \to X$ that specialize to $\varphi$.

We now show that this number does not change when  contracting a non-loop edge.  Since every graph of genus $g$ can be obtained from a 3-regular graph by contracting edges, the result will follow.  First, if we contract an edge that does not intersect $\gamma$, then the calculation above does not change.  Next, if we contract an edge that is not contained in $\gamma$ but with both endpoints contained in $\gamma$, then the local Hurwitz numbers multiply and the number of automorphisms does not change.  Third, if we contract an edge $e$ contained in $\gamma$, then the local Hurwitz number at the image of $e$  equals twice the product of the local Hurwitz numbers at two endpoints of $e$.  Again, the calculation above does not change.

Finally, if we contract an edge $e$ with one endpoint $v$ in $\gamma$ and the other endpoint $w$ not in $\gamma$, then the local Hurwitz number at $v$ increases by a factor of $2^{\textrm{val}(w)-2}$.  If $G_i$ is the connected component of $G \smallsetminus \gamma$ containing $e$, and $G'_1 , \ldots , G'_k$ are the connected components of its image under the contraction, then $h$ increases by $k+1 - \textrm{val}(w)$ and the number of automorphisms of the cover increases by a factor of $2^{k-1}$.  Thus, the number of double covers specializing to $\varphi$ does not change.
\end{proof}

An immediate consequence of Lemmas \ref{Lem:harmonicCovers} and \ref{Lem:Hurwitz} is the following.

\begin{proposition}
\label{Prop:Lifting}
Let $\gamma$ be a cycle in an unaugmented metric graph $\Gamma$.  Then there are $2^g$ unramified double covers $\overline{\varphi} : \widetilde{X} \to X$  specializing to an unramified double cover with dilation cycle $\gamma$.
 \end{proposition}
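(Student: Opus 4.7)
The plan is simply to combine Lemmas \ref{Lem:harmonicCovers} and \ref{Lem:Hurwitz} by partitioning the set we want to count according to the intermediate tropical cover. Any unramified double cover $\overline{\varphi} : \widetilde{X} \to X$ has a well-defined specialization to an unramified double cover $\varphi : \widetilde{\Gamma} \to \Gamma$ of skeletons, and we are counting those $\overline{\varphi}$ whose specialization has dilation cycle equal to the prescribed $\gamma$. Grouping the $\overline{\varphi}$'s by their specialization $\varphi$ reduces the count to a product.

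Concretely, I would first invoke Lemma \ref{Lem:harmonicCovers} to obtain exactly $2^h$ isomorphism classes of unramified double covers $\varphi : \widetilde{\Gamma} \to \Gamma$ with dilation cycle $\gamma$, where $h$ denotes the sum of the genera of the connected components of $\Gamma \smallsetminus \gamma$. Then, for each such tropical cover $\varphi$, Lemma \ref{Lem:Hurwitz} provides exactly $2^{g-h}$ unramified double covers $\overline{\varphi} : \widetilde{X} \to X$ specializing to $\varphi$. Multiplying gives $2^h \cdot 2^{g-h} = 2^g$, and the dependence on $h$ disappears, as desired.

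There is no genuine obstacle here: both lemmas have already been established, and the only thing to verify is that the partitioning by specialization is a legitimate partition — namely, that every $\overline{\varphi}$ we are counting specializes to a unique $\varphi$ with dilation cycle $\gamma$, and that conversely every $\varphi$ accounted for in Lemma \ref{Lem:harmonicCovers} actually arises as a specialization (so none of the $2^h$ classes is ``empty''). The first point is built into the definition of specialization of harmonic morphisms, and the second is precisely the content of Lemma \ref{Lem:Hurwitz}, whose count $2^{g-h}$ is positive. The only subtlety worth flagging in the write-up is the cancellation of $h$: although both factors individually depend on the combinatorics of $\Gamma \smallsetminus \gamma$, the product is a topological invariant of $\Gamma$ alone. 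This mirrors the algebraic fact that $\Jac_2(X)$ has order $2^{2g}$ and each $2$-torsion class corresponds bijectively to an unramified double cover, giving $2^g$ covers specializing into any fixed fiber of $\Trop : \Jac_2(X) \to \Jac_2(\Gamma)$.
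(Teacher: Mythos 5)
Your argument is exactly the paper's proof: apply Lemma \ref{Lem:harmonicCovers} to get $2^h$ tropical covers with dilation cycle $\gamma$, then Lemma \ref{Lem:Hurwitz} to lift each in $2^{g-h}$ ways, and multiply. The extra remarks about the partition being well defined are sound but not needed beyond what the two lemmas already provide.
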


\begin{proof}
By Lemma \ref{Lem:harmonicCovers}, there are $2^h$ unramified double covers with dilation cycle $\gamma$.  By Lemma \ref{Lem:Hurwitz}, each of these lifts to $2^{g-h}$ double covers of $X$.  Hence there are $2^h \cdot 2^{g-h} = 2^g$ such double covers of $X$.
\end{proof}

We now describe the connection between covers of $\Gamma$ and covers of $X$.

\begin{theorem}
\label{Thm:Covers}
Let $\cD$ be a 2-torsion point in $\Jac(X)$.  Let $\overline{\varphi}: \widetilde{X} \to X$ be the corresponding double cover and $\varphi : \widetilde{\Gamma} \to \Gamma$ the specialization of this cover.  Then $\Trop (\cD) = D_{\gamma (\varphi)}$.
\end{theorem}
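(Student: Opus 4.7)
The strategy is to write $\cD$ through a defining function and compare explicit piecewise linear functions on $\Gamma$. Choose $f \in k(X)^*$ with $2\cD = \ddiv(f)$, so that $\overline{\varphi}$ is the Kummer double cover with function field $K(X)(\sqrt{f})$, unramified because $\ddiv(f)$ is even. Let $F = \trop(f)$ be the tropicalization of $f$, a piecewise linear function on $\Gamma$; the ambiguity in choosing $f$ (up to nonzero constants and squares in $k(X)^*$) translates to ambiguity in $F$ up to adding constants and functions of the form $2\trop(g)$, neither of which alters slope parities nor the class $[\Trop(\cD)]$. Since tropicalization is compatible with taking divisors of principal functions, we obtain $2\Trop(\cD) = \ddiv(F)$ in $\Div(\Gamma)$.

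The crucial geometric input is that the dilation cycle $\gamma(\varphi)$ coincides with the set of edges of $\Gamma$ on which $F$ has odd slope. On an edge $e$ of $\Gamma$ realized as an annulus $A \subseteq X^{\an}$ with uniformizer $t$, the restriction $f|_A$ has the form $t^m u$ with $u$ a unit and $m$ equal to the slope of $F$ on $e$. The equation $y^2 = t^m u$ defines either two disjoint unramified annuli over $A$ (when $m$ is even, via $y = t^{m/2}\sqrt{u}$), or a single annulus covering $A$ with dilation factor $2$ (when $m$ is odd, since then $\sqrt{t}$ must be adjoined). Interpreting this through the induced harmonic morphism of skeleta, as developed in \cite{ABBR1}, identifies $\gamma(\varphi)$ with the odd-slope locus of $F$.

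Combining this with the construction of $D_{\gamma(\varphi)}$ from Section~\ref{Sec:Theta}, where $2D_{\gamma(\varphi)} = \ddiv(d_{\gamma(\varphi)} - d_p)$, and the slope calculation in the proof of Proposition~\ref{Prop:Kernel}, which shows that $d_{\gamma(\varphi)} - d_p$ has odd slope at a tangent vector precisely when that tangent vector lies in $\gamma(\varphi)$, we see that $G := F - (d_{\gamma(\varphi)} - d_p)$ has everywhere even integer slopes. Hence $h := \tfrac{1}{2}G$ lies in $\PL(\Gamma)$, satisfies $\ddiv(G) = 2\ddiv(h)$, and
\[
2\bigl(\Trop(\cD) - D_{\gamma(\varphi)}\bigr) = \ddiv(F) - \ddiv(d_{\gamma(\varphi)} - d_p) = 2\ddiv(h).
\]
Cancelling the factor of $2$ in the torsion-free group $\Div(\Gamma)$ yields $\Trop(\cD) - D_{\gamma(\varphi)} = \ddiv(h)$, so $\Trop(\cD) = D_{\gamma(\varphi)}$ in $\Pic(\Gamma)$.

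The main obstacle is the geometric identification in the second paragraph: rigorously establishing that the dilation cycle of the specialized cover equals the odd-slope locus of $F$. This hinges on the Berkovich-to-skeleton dictionary for finite harmonic morphisms from \cite{ABBR1}, together with bookkeeping about the admissible twists of $f$ and the local model of the Kummer cover. Once that identification is in hand, the remaining steps are essentially divisor arithmetic on $\Gamma$.
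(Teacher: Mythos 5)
Your proof is essentially correct, but it takes a genuinely different route from the paper's. The paper argues indirectly: from the commutative square relating $\Trop$ and pullback it deduces via Proposition \ref{Prop:Kernel} that $\Trop(\cD) = D_\gamma$ for some $\gamma \subseteq \gamma(\varphi)$, and then rules out proper containment by a counting-and-induction argument, matching the $2^g$ divisor classes lying over each $D_{\gamma'}$ against the $2^g$ lifted covers supplied by Proposition \ref{Prop:Lifting}. You instead compute directly with a defining function $f$ for $2\cD$, using the slope formula to get $2\Trop(\cD) = \ddiv(\trop(f))$ and identifying the dilation cycle with the odd-slope locus of $\trop(f)$ through the local model $y^2 = t^m u$ on annuli; the concluding divisor arithmetic (parity of the slopes of $d_{\gamma(\varphi)} - d_p$, cancellation of the factor $2$ in the torsion-free group $\Div(\Gamma)$) is correct and mirrors the computation in the proof of Proposition \ref{Prop:Kernel}. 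Your approach buys an explicit description of $\Trop(\cD)$ as the odd-slope locus of $\trop(f)$ and dispenses with the induction; the paper's approach avoids all local analytic input by leaning on counting results it has already established.

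Two caveats on your second paragraph, which you rightly flag as the crux. First, to write $f|_A = t^m u$ with $u$ a unit on every edge-annulus, you must refine the skeleton so that the support of a representative of $\cD$ retracts to vertices; harmless, but it should be said. Second, extracting $\sqrt{u}$ on the annulus via the binomial series requires the residue characteristic to be different from $2$ (tameness), not merely $\mathrm{char}(k) \neq 2$; the same hypothesis is implicit in the paper's appeal to \cite{CMR} in Lemma \ref{Lem:Hurwitz}, so you assume no more than the paper does, but it is worth making explicit. With those points addressed, your identification of $\gamma(\varphi)$ with the odd-slope locus is the standard annulus-by-annulus analysis from \cite{ABBR1}, and the argument goes through.
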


\begin{proof}
Consider the following diagram.
\[
\xymatrix{
\Jac (X) \ar[r]^{\Trop} \ar[d]^{\overline{\varphi}^*}  & \Jac (\Gamma) \ar[d]^{\varphi^*} \\
\Jac (\widetilde{X}) \ar[r]^{\Trop} & \Jac (\widetilde{\Gamma}) }
\]
Since $\overline{\varphi}^* \cD = 0$, we see that $\varphi^* \Trop(\cD) = 0$ as well.  By Proposition \ref{Prop:Kernel}, it follows that $\Trop(\cD) = D_\gamma$ for some $\gamma\subseteq\gamma(\varphi)$.

Suppose by induction that the theorem is true for every cycle properly contained in $\gamma(\varphi)$. That is, for any cycle $\gamma' \subsetneq \gamma(\varphi)$,
every double cover $\overline\psi$ of $X$ that specializes to an unramified double cover $\psi$ of $\Gamma$ with dilation cycle $\gamma'$ corresponds to a divisor $D_{\overline{\psi}}$ on $X$ that specializes to $D_{\gamma'}$.  For any such $\gamma'$, there are $2^g$ divisor classes specializing to $D_{\gamma'}$ (since this is the order of the kernel $\Lambda$), and by Proposition \ref{Prop:Lifting} there are $2^g$ double covers of $X$ specializing to an unramified double cover of $\Gamma$ with dilation cycle $\gamma'$.  Because these two sets have the same size, every divisor specializing to $D_{\gamma'}$ corresponds to a double cover that specializes to an unramified double cover with dilation cycle $\gamma'$.  It follows that the containment $\gamma \subseteq \gamma (\varphi)$ cannot be strict.
\end{proof}

\begin{remark}
By Theorem~\ref{Thm:Covers}, there is a canonical bijection between  $\Lambda$ (the kernel of the map $\Jac_2 (X) \to \Jac_2 (\Gamma)$) and the set of degree 2 honest covering spaces of $\Gamma$. Because of this, the combinatorics of the graph $\Gamma$ ``sees'' the entire 2-torsion subgroup $\Jac_2 (X)$.
More precisely, $\Jac_2 (X)$ is an extension of $\Jac_2 (\Gamma) \cong H_1 (\Gamma, \Z/2\Z)$ by the set of covering spaces of $\Gamma$. In fact, this extension splits (non-canonically) as all the groups involved are vector spaces.  \end{remark}

\begin{example}
\label{Ex:Cover}
As an example,  consider the honest double cover of our earlier example $K_4$ by the cube $\widetilde{K_4}$, again with all edge lengths 1, pictured in Figure \ref{Fig:Cube}.  By Lemma \ref{Lem:Hurwitz}, there exists a unique double cover $\overline{\varphi} : \widetilde{X} \to X$ specializing to this double cover of $K_4$.

\begin{figure}[h]
\begin{tikzpicture}

\draw [ball color=black] (0,0) circle (0.55mm);
\draw [ball color=black] (4,0) circle (0.55mm);
\draw [ball color=black] (0,4) circle (0.55mm);
\draw [ball color=black] (4,4) circle (0.55mm);
\draw [ball color=black] (1,1) circle (0.55mm);
\draw [ball color=black] (1,3) circle (0.55mm);
\draw [ball color=black] (3,1) circle (0.55mm);
\draw [ball color=black] (3,3) circle (0.55mm);
\draw (0,0)--(4,0);
\draw (0,0)--(0,4);
\draw (0,0)--(1,1);
\draw (4,0)--(4,4);
\draw (4,0)--(3,1);
\draw (0,4)--(4,4);
\draw (0,4)--(1,3);
\draw (4,4)--(3,3);
\draw (1,1)--(1,3);
\draw (1,1)--(3,1);
\draw (1,3)--(3,3);
\draw (3,1)--(3,3);
\draw (-0.3,-0.3) node {\footnotesize $A$};
\draw (-0.3,4.3) node {\footnotesize $B$};
\draw (4.3,-0.3) node {\footnotesize $C$};
\draw (1.3,1.3) node {\footnotesize $D$};
\draw (4.3,4.3) node {\footnotesize $D'$};
\draw (1.3,2.7) node {\footnotesize $C'$};
\draw (2.7,1.3) node {\footnotesize $B'$};
\draw (2.7,2.7) node {\footnotesize $A'$};

\draw [->] (5,2)--(7,2);

\draw [ball color=black] (8,0) circle (0.55mm);
\draw [ball color=black] (12,0) circle (0.55mm);
\draw [ball color=black] (10,3.46) circle (0.55mm);
\draw [ball color=black] (10,1.73) circle (0.55mm);
\draw (8,0)--(12,0);
\draw (8,0)--(10,3.46);
\draw (8,0)--(10,1.73);
\draw (12,0)--(10,3.46);
\draw (12,0)--(10,1.73);
\draw (10,3.46)--(10,1.73);
\draw (7.7,-0.3) node {\footnotesize $A$};
\draw (12.3,-0.3) node {\footnotesize $C$};
\draw (10,4.03) node {\footnotesize $B$};
\draw (10,1.43) node {\footnotesize $D$};

\end{tikzpicture}
\caption{A double cover of $K_4$ by a cube.}
\label{Fig:Cube}
\end{figure}
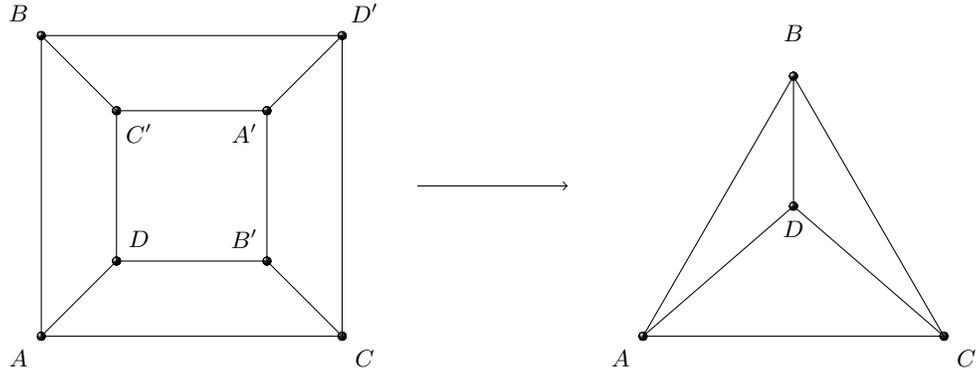

Figure \ref{Fig:Pullback} depicts the pullback of $D_{\gamma}$ for each of the two effective theta characteristics $L_{\gamma}$ pictured in Figure \ref{Fig:Theta}.  The divisor on the left corresponds to the case where $\gamma$ is a triangle, whereas the divisor on the right corresponds to the square.  We leave it to the reader to verify that neither of these divisors is equivalent to zero.

\begin{figure}[h]
\begin{tikzpicture}

\draw [ball color=black] (0,0) circle (0.55mm);
\draw [ball color=black] (4,0) circle (0.55mm);
\draw [ball color=black] (0,4) circle (0.55mm);
\draw [ball color=black] (4,4) circle (0.55mm);
\draw [ball color=black] (1,1) circle (0.55mm);
\draw [ball color=black] (1,3) circle (0.55mm);
\draw [ball color=black] (3,1) circle (0.55mm);
\draw [ball color=black] (3,3) circle (0.55mm);
\draw [ball color=black] (2,4) circle (0.55mm);
\draw [ball color=black] (4,2) circle (0.55mm);
\draw [ball color=black] (1,2) circle (0.55mm);
\draw [ball color=black] (2,1) circle (0.55mm);
\draw [ball color=black] (0.5,3.5) circle (0.55mm);
\draw [ball color=black] (3.5,0.5) circle (0.55mm);
\draw (0,0)--(4,0);
\draw (0,0)--(0,4);
\draw (0,0)--(1,1);
\draw (4,0)--(4,4);
\draw (4,0)--(3,1);
\draw (0,4)--(4,4);
\draw (0,4)--(1,3);
\draw (4,4)--(3,3);
\draw (1,1)--(1,3);
\draw (1,1)--(3,1);
\draw (1,3)--(3,3);
\draw (3,1)--(3,3);
\draw (-0.3,-0.3) node {\footnotesize $3$};
\draw (2.7,2.7) node {\footnotesize $3$};
\draw (2,4.3) node {\footnotesize $-1$};
\draw (4.3,2) node {\footnotesize $-1$};
\draw (1.3,2) node {\footnotesize $-1$};
\draw (2,1.3) node {\footnotesize $-1$};
\draw (3.5,0.2) node {\footnotesize $-1$};
\draw (0.5,3.75) node {\footnotesize $-1$};

\draw [ball color=black] (8,0) circle (0.55mm);
\draw [ball color=black] (12,0) circle (0.55mm);
\draw [ball color=black] (8,4) circle (0.55mm);
\draw [ball color=black] (12,4) circle (0.55mm);
\draw [ball color=black] (9,1) circle (0.55mm);
\draw [ball color=black] (9,3) circle (0.55mm);
\draw [ball color=black] (11,1) circle (0.55mm);
\draw [ball color=black] (11,3) circle (0.55mm);
\draw [ball color=black] (10,0) circle (0.55mm);
\draw [ball color=black] (12,2) circle (0.55mm);
\draw [ball color=black] (9,2) circle (0.55mm);
\draw [ball color=black] (10,3) circle (0.55mm);
\draw [ball color=black] (8.5,3.5) circle (0.55mm);
\draw [ball color=black] (11.5,0.5) circle (0.55mm);
\draw (8,0)--(12,0);
\draw (8,0)--(8,4);
\draw (8,0)--(9,1);
\draw (12,0)--(12,4);
\draw (12,0)--(11,1);
\draw (8,4)--(12,4);
\draw (8,4)--(9,3);
\draw (12,4)--(11,3);
\draw (9,1)--(9,3);
\draw (9,1)--(11,1);
\draw (9,3)--(11,3);
\draw (11,1)--(11,3);
\draw (7.7,-0.3) node {\footnotesize $1$};
\draw (10,-0.3) node {\footnotesize $1$};
\draw (10,2.7) node {\footnotesize $1$};
\draw (10.7,2.7) node {\footnotesize $1$};
\draw (12.3,2) node {\footnotesize $-1$};
\draw (9.3,2) node {\footnotesize $-1$};
\draw (11.5,0.2) node {\footnotesize $-1$};
\draw (8.5,3.75) node {\footnotesize $-1$};

\end{tikzpicture}
\caption{Pullbacks of two 2-torsion divisors from $K_4$.}
\label{Fig:Pullback}
\end{figure}
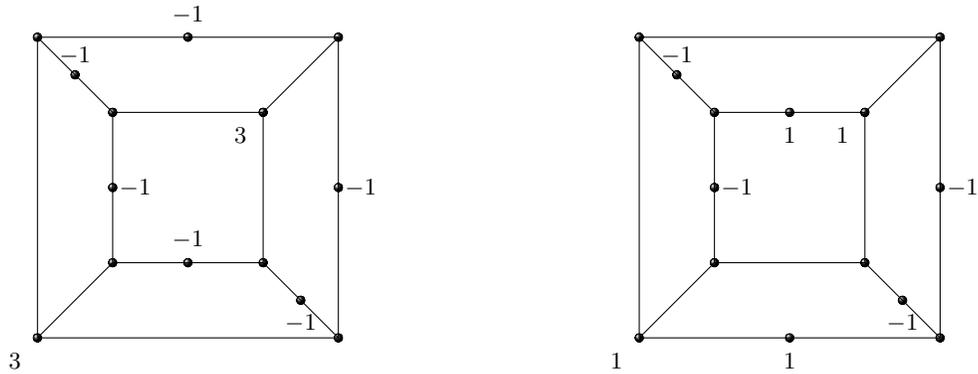
\end{example}

\begin{example}
\label{Ex:DilatedCovers}
By Lemma \ref{Lem:harmonicCovers}, for any cycle $\gamma \neq 0$ in the graph $K_4$ of the previous example, there is a unique unramified double cover $\varphi : \widetilde{K_4} \to K_4$ with dilation cycle $\gamma$.  These are depicted in Figures \ref{Fig:TriangleCover} and \ref{Fig:SquareCover}, with the dilation cycle marked by a solid line.  By Lemma \ref{Lem:Hurwitz}, each of these lifts to 8 distinct unramified double covers of the curve $X$.

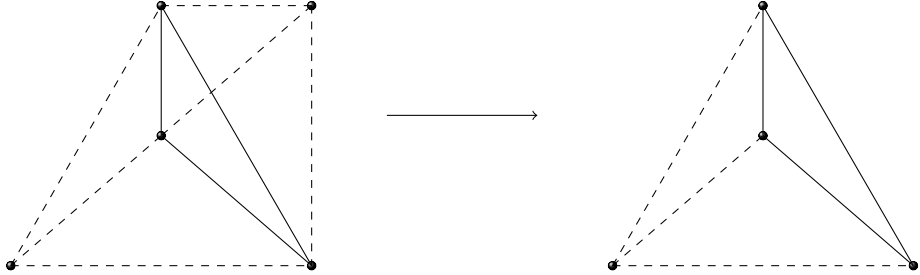
\begin{figure}[h]
\begin{tikzpicture}

\draw [ball color=black] (0,0) circle (0.55mm);
\draw [ball color=black] (4,0) circle (0.55mm);
\draw [ball color=black] (2,3.46) circle (0.55mm);
\draw [ball color=black] (2,1.73) circle (0.55mm);
\draw [ball color=black] (4,3.46) circle (0.55mm);
\draw [dashed] (0,0)--(4,0);
\draw [dashed] (0,0)--(2,3.46);
\draw [dashed] (0,0)--(2,1.73);
\draw [dashed] (4,3.46)--(4,0);
\draw [dashed] (4,3.46)--(2,3.46);
\draw [dashed] (4,3.46)--(2,1.73);
\draw (4,0)--(2,3.46);
\draw (4,0)--(2,1.73);
\draw (2,3.46)--(2,1.73);

\draw [->] (5,2)--(7,2);

\draw [ball color=black] (8,0) circle (0.55mm);
\draw [ball color=black] (12,0) circle (0.55mm);
\draw [ball color=black] (10,3.46) circle (0.55mm);
\draw [ball color=black] (10,1.73) circle (0.55mm);
\draw [dashed] (8,0)--(12,0);
\draw [dashed] (8,0)--(10,3.46);
\draw [dashed] (8,0)--(10,1.73);
\draw (12,0)--(10,3.46);
\draw (12,0)--(10,1.73);
\draw (10,3.46)--(10,1.73);

\end{tikzpicture}
\caption{The unique double cover with dilation cycle a given triangle in $K_4$.}
\label{Fig:TriangleCover}
\end{figure}

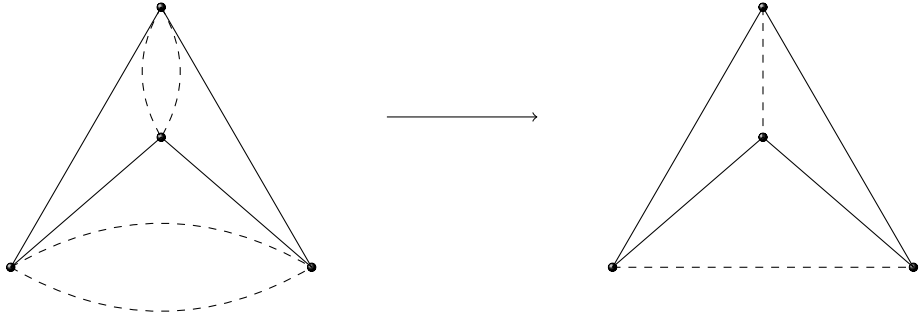
\begin{figure}[h]
\begin{tikzpicture}

\draw [ball color=black] (0,0) circle (0.55mm);
\draw [ball color=black] (4,0) circle (0.55mm);
\draw [ball color=black] (2,3.46) circle (0.55mm);
\draw [ball color=black] (2,1.73) circle (0.55mm);
\path [dashed] (0,0) edge [bend left] (4,0);
\path [dashed] (0,0) edge [bend right] (4,0);
\draw (0,0)--(2,3.46);
\draw (0,0)--(2,1.73);
\draw (4,0)--(2,3.46);
\draw (4,0)--(2,1.73);
\path [dashed] (2,3.46) edge [bend left] (2,1.73);
\path [dashed] (2,3.46) edge [bend right] (2,1.73);

\draw [->] (5,2)--(7,2);

\draw [ball color=black] (8,0) circle (0.55mm);
\draw [ball color=black] (12,0) circle (0.55mm);
\draw [ball color=black] (10,3.46) circle (0.55mm);
\draw [ball color=black] (10,1.73) circle (0.55mm);
\draw [dashed] (8,0)--(12,0);
\draw (8,0)--(10,3.46);
\draw (8,0)--(10,1.73);
\draw (12,0)--(10,3.46);
\draw (12,0)--(10,1.73);
\draw [dashed] (10,3.46)--(10,1.73);

\end{tikzpicture}
\caption{The unique double cover with dilation cycle a given square in $K_4$.}
\label{Fig:SquareCover}
\end{figure}

\end{example}

\section{Prym Varieties}
\label{Sec:Pryms}

We now study the kernel of the pushforward map on divisors.  Recall that the Jacobian of an augmented graph $\Gamma$ is the Jacobian of the unaugmented graph $\Gamma^\#$ obtained by attaching $g(v)$ loops of length $\epsilon>0$ at every point $v$. Those loops are referred to as \emph{virtual} loops.

Given an unramified double cover $\varphi : \widetilde{\Gamma} \to \Gamma$, consider the involution $\iota : \widetilde{\Gamma}^\# \to \widetilde{\Gamma}^\#$ that sends a point $x$ of $\widetilde{\Gamma}$ to the other preimage point of $\varphi (x)$, and sends a point on a virtual loop to the other point equidistant from the base of the same loop.  Extending by linearity, $\iota$ defines an involution $\iota : \Pic (\widetilde{\Gamma}) \to \Pic (\widetilde{\Gamma})$. Recall that by \cite[Theorem 3.4]{BakerFaber11}, the Jacobian of an anaugmented metric graph is isomorphic to $\Omega (\Gamma)^*/H_1(\Gamma,\ZZ)$, where $\Omega (\Gamma)^*$ is dual to the space harmonic $1$-forms on $\Gamma$. For an augmented graph $\widetilde{\Gamma}$, we define $\Omega(\widetilde{\Gamma})$ and $\Omega(\widetilde{\Gamma})^*$ to be the space of harmonic $1$-forms and its dual of $\Gamma^\#$.



\begin{proposition}
Let $\varphi : \widetilde{\Gamma} \to \Gamma$ be an unramified double cover.  If $\gamma (\varphi) = 0$, then the kernel of the pushforward map
\[
\varphi_* : \Jac (\widetilde{\Gamma}) \to \Jac (\Gamma)
\]
has two connected components, namely
\[
\ker \varphi_* = (\mathrm{Id} - \iota) \Pic^0 (\widetilde{\Gamma}) \cup (\mathrm{Id} - \iota) \Pic^1 (\widetilde{\Gamma}) .
\]
If $\gamma (\varphi) \neq 0$, it has only one connected component, given by
\[
(\mathrm{Id}-i)\Pic^0(\tilde{\Gamma}).
\]
\end{proposition}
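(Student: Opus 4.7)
The plan is to proceed in three steps: establish the easy inclusion $(\mathrm{Id}-\iota)\Pic^\bullet\subseteq\ker\varphi_*$, identify the identity component of $\ker\varphi_*$ via an analysis on the universal cover, and count the connected components case by case.

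First, $(\mathrm{Id}-\iota)\Pic^d(\widetilde{\Gamma})\subseteq\ker\varphi_*$ for $d=0,1$ follows immediately from $\varphi\circ\iota=\varphi$, which forces $\varphi_*(D-\iota D)=0$ for every divisor $D$; note that $(\mathrm{Id}-\iota)D$ has degree zero regardless of $\deg(D)$, so both sets live in $\Jac(\widetilde{\Gamma})$.

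Next, to identify the identity component I would work on the universal cover. Using $\Jac(\widetilde{\Gamma})=\Omega(\widetilde{\Gamma})^*/\HH_1(\widetilde{\Gamma}^\#,\ZZ)$, the involution $\iota$ yields an eigenspace decomposition $\Omega(\widetilde{\Gamma})=\Omega^+\oplus\Omega^-$ in which the $\iota$-invariant forms are precisely pullbacks from $\Gamma$, giving $\varphi^*\colon\Omega(\Gamma)\xrightarrow{\sim}\Omega^+$. Dualizing, the pushforward $\varphi_*\colon V\twoheadrightarrow\Omega(\Gamma)^*$ with $V:=\Omega(\widetilde{\Gamma})^*$ has kernel exactly $V^-=(\mathrm{Id}-\iota)V$. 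Modding out by the lattice, the image of $V^-$ in $\Jac(\widetilde{\Gamma})$ is the identity component of $\ker\varphi_*$, and by connectedness of $\Pic^0(\widetilde{\Gamma})$ this coincides with the image of $(\mathrm{Id}-\iota)\colon\Pic^0(\widetilde{\Gamma})\to\Jac(\widetilde{\Gamma})$.

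For the component count, the key observation is that $(\mathrm{Id}-\iota)\Pic^1(\widetilde{\Gamma})$ is the translate of $(\mathrm{Id}-\iota)\Pic^0(\widetilde{\Gamma})$ by the class $[\tilde{x}-\iota\tilde{x}]$ for any chosen $\tilde{x}\in\widetilde{\Gamma}$. If $\gamma(\varphi)\neq 0$, I would pick $\tilde{x}$ in $\varphi^{-1}(\gamma(\varphi))$; by Lemma~\ref{Lem:Genus} such a point is fixed by $\iota$, so the translation vanishes and the two cosets coincide. If $\gamma(\varphi)=0$, the involution is fixed-point-free and $[\tilde{x}-\iota\tilde{x}]$ supplies a genuine second coset.

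The main obstacle is confirming that $\ker\varphi_*$ has no further components beyond those listed. I would address this via the snake lemma applied to the commutative diagram whose rows are $0\to\HH_1(\widetilde{\Gamma}^\#,\ZZ)\to V\to\Jac(\widetilde{\Gamma})\to 0$ and its analogue over $\Gamma$, with vertical maps induced by $\varphi_*$, yielding the identification $\pi_0(\ker\varphi_*)\cong\operatorname{coker}\bigl(\varphi_*\colon\HH_1(\widetilde{\Gamma}^\#,\ZZ)\to\HH_1(\Gamma,\ZZ)\bigr)$. When $\gamma(\varphi)=0$ this cokernel is $\ZZ/2\ZZ$, since the pushforward along a connected topological double cover has image of index two. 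When $\gamma(\varphi)\neq 0$ I would argue the cokernel vanishes: the unique preimage edge of a dilation edge, despite having slope $2$, still pushes forward with winding number $1$, and traversing such a preimage freely switches between the two sheets of the cover, so any cycle in $\Gamma$ admits a lift to $\widetilde{\Gamma}^\#$. Verifying this lifting statement carefully, especially for cycles that weave in and out of $\gamma(\varphi)$, is the crux of the argument.
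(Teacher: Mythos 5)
Your proposal reaches the right conclusion and shares the paper's identification of the identity component: both pass to the universal cover $\Omega(\widetilde{\Gamma})^*$, use the action of $\iota$ (equivalently the kernel $\Omega(\widetilde{\Gamma}/\Gamma)^*$ of $\psi_*$), and conclude that its image in $\Jac(\widetilde{\Gamma})$ is $(\mathrm{Id}-\iota)\Pic^0(\widetilde{\Gamma})$. Where you genuinely diverge is the component count. The paper shows $\mathrm{im}(\mathrm{Id}-\iota)=(\mathrm{Id}-\iota)\Pic^0(\widetilde{\Gamma})\cup(\mathrm{Id}-\iota)\Pic^1(\widetilde{\Gamma})$ by splitting an arbitrary divisor $D=D_1+D_2$ into halves, observes that every component of $\ker\varphi_*$ is a translate of $P(\varphi)$ by an element of $\varphi^*\Jac_2(\Gamma)\subset\mathrm{im}(\mathrm{Id}-\iota)$, and obtains the lower bound of two components in the undilated case by counting 2-torsion ($2^g$ elements of $\varphi^*\Jac_2(\Gamma)$ versus only $2^{g-1}$ 2-torsion points on a $(g-1)$-dimensional torus). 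You instead apply the snake lemma to get $\pi_0(\ker\varphi_*)\cong\mathrm{coker}\bigl(\varphi_*:H_1(\widetilde{\Gamma}^\#,\Z)\to H_1(\Gamma,\Z)\bigr)$ and compute this cokernel directly: $\Z/2\Z$ for a connected topological double cover, and $0$ when $\gamma(\varphi)\neq 0$ because loops based at a point of the dilation cycle lift to closed loops (the fibre over such a point is a single point, so every lift of a loop based there closes up). This is a clean and correct alternative; it delivers the exact number of components in one stroke, without the degree-splitting trick or the torsion count, and the lifting statement you flag as the crux does go through once you generate $H_1(\Gamma,\Z)$ by loops based on $\gamma(\varphi)$. (Both your argument and the paper's implicitly assume $\widetilde{\Gamma}$ is connected when $\gamma(\varphi)=0$; for the trivial cover the proposition as stated fails.)

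There is one step you assert rather than prove, and it is needed for the stated equality when $\gamma(\varphi)=0$: you claim that because $\iota$ is fixed-point-free, the class $[\tilde{x}-\iota\tilde{x}]$ ``supplies a genuine second coset.'' Fixed-point-freeness alone does not rule out $[\tilde{x}-\iota\tilde{x}]\in(\mathrm{Id}-\iota)\Pic^0(\widetilde{\Gamma})$, and without this your snake-lemma computation only shows that $\ker\varphi_*$ has two components, not that the second one is $(\mathrm{Id}-\iota)\Pic^1(\widetilde{\Gamma})$. The repair is available inside your own framework: the connecting homomorphism $\ker\varphi_*\to\mathrm{coker}\bigl(\varphi_*:H_1(\widetilde{\Gamma}^\#,\Z)\to H_1(\Gamma,\Z)\bigr)$ sends $[\tilde{x}-\iota\tilde{x}]$ to the class of the loop $\varphi(P)$, where $P$ is any path in $\widetilde{\Gamma}$ from $\iota\tilde{x}$ to $\tilde{x}$; since $P$ is a non-closed lift of $\varphi(P)$, the class $[\varphi(P)]$ does not lie in the image of $H_1(\widetilde{\Gamma},\Z)$, hence is the nontrivial element of the $\Z/2\Z$ cokernel, and $(\mathrm{Id}-\iota)\Pic^1(\widetilde{\Gamma})$ is indeed the other component.
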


\begin{proof}

Consider the natural pullback map on harmonic 1-forms (which we denote $\psi^*$ to avoid confusion with $\varphi^*$)
\[
\psi^* : \Omega (\Gamma) \to \Omega (\widetilde{\Gamma}) .
\]
It is easy to see that this map is injective.  Dualizing, we obtain a surjective map between the universal covers of the Jacobians
\[
\psi_* : \Omega (\widetilde{\Gamma})^* \to \Omega (\Gamma)^* .
\]
\noindent  We write $\Omega (\widetilde{\Gamma}/\Gamma)^*$ for the kernel of $\psi_*$.  The map $\psi_*$ descends to the map $\varphi_*$ of Jacobians by quotienting out $H_1 (\Gamma,\Z)$ and $H_1 (\widetilde\Gamma , \Z)$, respectively.  The kernel of $\varphi_*$ therefore corresponds to
$\psi_*^{-1} H_1 (\Gamma , \Z)$.  Since the map  $\psi_* \psi^*$ is multiplication by 2, it follows that
\[
\psi_*^{-1} H_1 (\Gamma , \Z) = \Omega (\widetilde{\Gamma}/\Gamma)^* + \frac{1}{2} \psi^* H_1 (\Gamma, \Z) .
\]

\noindent Passing to the quotient, we see that $\ker\varphi_*$ consists of translates of the $(g-1)$ dimensional torus
\[
P(\varphi) := \Omega (\widetilde{\Gamma}/\Gamma)^* / ( H_1 (\widetilde{\Gamma}, \Z) \cap \Omega (\widetilde{\Gamma}/\Gamma)^* ) \subset \Jac (\widetilde{\Gamma})
\]
by elements of $\varphi^* \Jac_2 (\Gamma)$.

To compute the number of connected components, it therefore suffices to determine when the pullbacks of $D,E \in \Jac_2 (\Gamma)$ lie in the same component.  Note that
\[
\varphi^* \Jac_2 (\Gamma) \subset \mathrm{im} (\mathrm{Id} - \iota) \subseteq \ker \varphi_* ,
\]
so if $D$ and $E$ lie in the same connected component of $\mathrm{im} (\mathrm{Id} - \iota)$, then they lie in the same connected component of $\ker \varphi_*$.

Let $D$ be a divisor of degree $k$ on $\widetilde{\Gamma}$.  Choose divisors $D_1 , D_2$ such that $D = D_1 + D_2$, $\deg(D_1) = \lceil \frac{k}{2} \rceil$, $\deg (D_2) = \lfloor \frac{k}{2} \rfloor$.  Then
\[
D - \iota (D) = (D_1 - \iota (D_2)) - \iota (D_1 - \iota (D_2)).
\]
Since $D_1 - \iota (D_2)$ has degree either 0 or 1, we see that
\[
\mathrm{im} (\mathrm{Id} - \iota) = (\mathrm{Id} - \iota) \Pic^0 (\widetilde{\Gamma}) \cup (\mathrm{Id} - \iota) \Pic^1 (\widetilde{\Gamma}) .
\]
It follows that $\ker \varphi_*$ has at most 2 connected components.

If $\gamma (\varphi) \neq 0$, then any point $x \in \varphi^{-1} (\gamma (\varphi))$ is fixed by the involution $\iota$.  Then, for any divisor $D \in \Pic^0 (\widetilde{\Gamma})$, we have
\[
D - \iota (D) = (D+x) - \iota (D+x),
\]
so
\[
(\mathrm{Id} - \iota) \Pic^0 (\widetilde{\Gamma}) = (\mathrm{Id} - \iota) \Pic^1 (\widetilde{\Gamma}),
\]
hence $\ker \varphi_*$ has only one connected component.

On the other hand, if $\gamma (\varphi) = 0$, then by Proposition \ref{Prop:Kernel}, $\varphi^*$ is injective, so $\vert \varphi^* \Jac_2 (\Gamma) \vert = 2^g$.  Since the $(g-1)$-dimensional torus $P(\varphi)$ contains only $2^{g-1}$ elements of order 2, we see that $\ker \varphi_*$ must contain at least two connected components.

\end{proof}

\begin{definition}
We define the \emph{Prym variety} $P(\varphi)$ to be the connected component of $\ker \varphi_*$ containing 0.  That is,
\[
P(\varphi) := (\mathrm{Id} - \iota) \Pic^0 (\widetilde{\Gamma}).
\]
\end{definition}

\noindent Note that the Prym variety $P(\varphi)$ is a real torus of dimension $g-1$.

Recall that in \cite{BakerRabinoff13}, it is shown that the Jacobian of the skeleton of a curve is the skeleton of its Jacobian. It is natural to ask whether an analogous  statement holds  for Pryms.
\begin{conjecture}
Let $f:C'\to C$ be an unramified double cover of smooth curves, and let $\phi:\Gamma'\to\Gamma$ be its tropicalization.  Then the skeleton of the Prym variety associated with $f$ is the tropical Prym variety associated with $\phi$. In other words, the Prym construction commutes with tropicalization.
\end{conjecture}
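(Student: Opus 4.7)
The natural strategy is to combine Baker-Rabinoff's non-archimedean uniformization of Jacobians with the explicit description of each Prym as $(\mathrm{Id}-\iota)\Pic^0$. First, I would establish that specialization commutes with pushforward, making the diagram
\[
\xymatrix{
\Jac(C') \ar[r]^{\Trop} \ar[d]^{f_*} & \Jac(\Gamma') \ar[d]^{\phi_*} \\
\Jac(C) \ar[r]^{\Trop} & \Jac(\Gamma)
}
\]
commute, and that $\Trop$ intertwines the algebraic involution $\iota$ on $\Jac(C')$ with its combinatorial counterpart on $\Jac(\Gamma')$. Both statements should follow from the functoriality of the analytic uniformization of \cite{BakerRabinoff13} applied to the morphism of abelian varieties $f_*$ and to the automorphism $\iota$. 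As a consequence, $\Trop$ sends the identity component of $\ker f_*$ into the identity component of $\ker \phi_*$, giving a continuous group homomorphism $P^{\an}\to P(\phi)$ that restricts to a homomorphism $\Sigma(P)\to P(\phi)$ on the skeleton.

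Next, I would verify that $\Sigma(P)$ is a real torus of dimension $g-1$. Since $g(\Gamma')=2g-1=\dim\Jac(C')$, the Jacobian $\Jac(C')$ has totally degenerate reduction, and its uniformization $(\mathbb{G}_m^{\an})^{2g-1}\to\Jac(C')^{\an}$ is $\iota$-equivariant for a canonical lift of $\iota$. Because $\mathrm{char}(k)\neq 2$, the character lattice $M'$ splits into $\pm 1$-eigenspaces for $\iota$, and the $(-1)$-eigenspace has rank $g-1$. The corresponding subtorus of the uniformization descends to $P^{\an}$, so $P$ is itself fully uniformized, $\Sigma(P)$ is a real torus of dimension $g-1$, and the induced map $\Sigma(P)\to P(\phi)$ is a map of real tori of equal dimension.

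Finally, surjectivity of $\Sigma(P)\to P(\phi)$ can be established by a lifting argument in the spirit of Theorem~\ref{Thm:Surjective}: divisibility of the value group allows any class in $P(\phi) = \Omega(\Gamma'/\Gamma)^*/(H_1(\Gamma',\Z)\cap\Omega(\Gamma'/\Gamma)^*)$ to be lifted through the analytic uniformization of $P$ to a point of $P^{\an}$ that tropicalizes to it. Combined with the dimension count, this upgrades the surjection to an isomorphism of real tori.

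The main obstacle is the second step: rigorously carrying out the $\iota$-equivariant decomposition of the Baker-Rabinoff uniformization and matching the resulting $(-1)$-eigenspace of $M'$ with $\Omega(\Gamma'/\Gamma)^*\cap H_1(\Gamma',\Z)$. In essence this requires showing that the Baker-Rabinoff identification of the character lattice with the first homology of the skeleton is $\iota$-equivariant, a fact that is natural to expect but must be established with care, particularly when the dilation cycle $\gamma(\phi)$ is nonempty and the covering graph $\Gamma'$ carries a nontrivial genus function.
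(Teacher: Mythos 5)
You should first be aware that the paper offers no proof of this statement --- it is posed there as an open conjecture --- so your argument must stand entirely on its own. Your opening step (functoriality of $\Trop$ with respect to $f_*$ and the involution $\iota$, yielding a homomorphism $\Sigma(P)\to P(\phi)$ between skeleton and tropical Prym) is the natural one and is consistent in spirit with the commutative squares the paper already invokes in Theorem~\ref{Thm:Covers} and Proposition~\ref{Prop:WeilDescends}.

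The later steps, however, contain genuine gaps, and they sit exactly where the difficulty of the conjecture lies. First, the assertion that $g(\Gamma')=2g-1=\dim\Jac(C')$, so that $\Jac(C')$ has totally degenerate reduction and is uniformized by $(\mathbb{G}_m^{\an})^{2g-1}$, can fail: when the dilation cycle $\gamma(\phi)$ passes through a point with at least four tangent directions in the cycle, Lemma~\ref{Lem:Genus} forces the covering graph to carry a nontrivial genus function, the first Betti number of the honest skeleton $\Gamma'$ drops below $2g-1$, and $\Jac(C')^{\an}$ is uniformized by a nonsplit semiabelian variety rather than a torus. The virtual loops of length $\epsilon$ used to define $\Jac(\Gamma')$ are a bookkeeping device, not part of the skeleton of $\Jac(C')$, so your eigenspace argument does not apply verbatim in the dilated case; you flag this as the ``main obstacle'' but then treat it as a verification rather than a missing argument. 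Second, even in the totally degenerate case a $\ZZ$-lattice with an involution does not split into $\pm1$-eigenspaces (only $M'\otimes\ZZ[\tfrac{1}{2}]$ does); the discrepancy between $(\mathrm{Id}-\iota)H_1(\Gamma',\ZZ)$ and the $(-1)$-eigenlattice $\ker(\mathrm{Id}+\iota)$ is an elementary abelian $2$-group, and this index-two phenomenon is precisely what produces the two components of $\ker\varphi_*$, so ``the corresponding subtorus descends to $P^{\an}$'' conceals the question of which lattice one quotients by, i.e., which component one lands in. Third, a surjective continuous homomorphism of real tori of equal dimension is an isogeny, not automatically an isomorphism; your dimension count plus the lifting argument yields at best an isogeny $\Sigma(P)\to P(\phi)$, and determining its degree --- equivalently, showing the induced map on integral homology lattices is unimodular --- is the substantive content of the conjecture, which the proposal leaves untouched.
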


\noindent The Prym construction gives rise to a map from the Hurwitz space of admissible double covers $\mathcal{R}^g$ to the moduli space of Abelian varieties \cite{ABH}.
If the conjecture holds, then we obtain a commutative diagram
\[
\xymatrix{
\mathcal{R}_g \ar[r]^{\Trop} \ar[d]^{\text{Pr}}  &\mathcal{R}_g^{tr} \ar[d]^{\text{Pr}} \\
A_g \ar[r]^{\Trop} & A_g^{tr} }
\]
where $\mathcal{R}_g^{tr}$ is the skeleton of $\mathcal{R}_g$ (see \cite {CMR}), and $\text{Pr}$ is the map taking a double cover to the corresponding Prym variety (cf. \cite{Viviani}).

On an algebraic curve, there is an alternative way to define the Weil pairing using the algebraic version of the construction above.  Given a 2-torsion point $\cD$ in the Jacobian of an algebraic curve $X$, let $\overline{\varphi} : \widetilde{X} \to X$ be the corresponding unramified double cover.  Since $\overline{\varphi}_* \overline{\varphi}^*$ is multiplication by 2, the pullback of any 2-torsion point $\cE \in \Jac_2 (X)$ is in the kernel of $\overline{\varphi}_*$.  One can show that the Weil pairing $\lambda (\cD,\cE)$ is 0 if $\overline{\varphi}^* \cE$ is contained in the Prym variety, and 1 if it is contained in the other connected component \cite[Exercise B.23]{ACGH}.
 In this way, the Weil pairing is best understood as a pairing between $\Jac_2 (X)$ and the set of unramified double covers of $X$.

In the tropical setting, this construction only makes sense when the double cover has dilation cycle $\gamma (\varphi) = 0$.  For such covers, however, the specialization of the Weil pairing is well-behaved. Recall that $\Lambda$ denotes the kernel of  the tropicalization map $\Jac_2 (X) \to \Jac_2 (\Gamma)$.
\begin{proposition}
\label{Prop:WeilDescends}
Let $\cD \in \Lambda$, let $\overline{\varphi} : \widetilde{X} \to X$ be the corresponding double cover, and $\varphi : \widetilde{\Gamma} \to \Gamma$ the associated double cover of skeletons.  Then, for any $\cE \in \Jac_2 (X)$, we have $\lambda (\cD,\cE) = 0$ if and only if
\[
\varphi^* \Trop (\cE) \in P(\varphi).
\]
\end{proposition}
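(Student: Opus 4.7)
The plan is to reduce the tropical statement to the classical algebraic fact recalled just before the proposition --- that $\lambda(\cD,\cE) = 0$ if and only if $\overline{\varphi}^* \cE \in P(\overline{\varphi})$ --- and then to transport the equivalence across the commutative diagram
\[
\xymatrix{
\Jac (X) \ar[r]^{\Trop} \ar[d]^{\overline{\varphi}^*}  & \Jac (\Gamma) \ar[d]^{\varphi^*} \\
\Jac (\widetilde{X}) \ar[r]^{\Trop} & \Jac (\widetilde{\Gamma}) }
\]
whose commutativity gives $\Trop(\overline{\varphi}^* \cE) = \varphi^* \Trop(\cE)$. Thus it suffices to show that $\overline{\varphi}^* \cE \in P(\overline{\varphi})$ if and only if $\Trop(\overline{\varphi}^* \cE) \in P(\varphi)$.

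Since $\cD \in \Lambda$, Theorem~\ref{Thm:Covers} forces $\gamma(\varphi) = 0$, and the preceding proposition tells us that $\ker \varphi_*$ breaks into two \emph{disjoint} components $P(\varphi) = (\mathrm{Id}-\iota)\Pic^0(\widetilde{\Gamma})$ and $(\mathrm{Id}-\iota)\Pic^1(\widetilde{\Gamma})$. The classical analogue is $P(\overline{\varphi}) = (\mathrm{Id}-\overline{\iota})\Pic^0(\widetilde{X})$, with the other component equal to $(\mathrm{Id}-\overline{\iota})\Pic^1(\widetilde{X})$, where $\overline{\iota}$ denotes the deck transformation of $\overline{\varphi}$. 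Assuming that $\overline{\iota}$ specializes to the tropical deck transformation $\iota$, we obtain for any $F \in \Pic^k(\widetilde{X})$ the identity
\[
\Trop\bigl((\mathrm{Id}-\overline{\iota})F\bigr) = (\mathrm{Id}-\iota)\Trop(F) \in (\mathrm{Id}-\iota)\Pic^k(\widetilde{\Gamma}).
\]
So $\Trop$ maps $P(\overline{\varphi})$ into $P(\varphi)$ and the other component of $\ker \overline{\varphi}_*$ into the other component of $\ker \varphi_*$; since the two tropical components are disjoint, the desired biconditional follows.

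The main obstacle is verifying that the algebraic involution $\overline{\iota}$ really does specialize to the tropical involution $\iota$. This should reduce to the functoriality of the skeleton construction applied to the \'etale automorphism $\overline{\iota} : \widetilde{X}\to \widetilde{X}$ over $X$: the induced automorphism of $\widetilde{\Gamma}$ over $\Gamma$ is nontrivial (for instance, because it must swap the two preimages of any point outside the empty dilation cycle), and the only such automorphism of a degree-$2$ cover is the sheet swap $\iota$. Granted this compatibility, the remainder of the argument is a straightforward diagram chase.
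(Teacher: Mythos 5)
Your argument is correct, and it establishes the biconditional by a genuinely different mechanism than the paper, even though both rest on the same two ingredients: the classical fact that $\lambda(\cD,\cE)=0$ iff $\overline{\varphi}^*\cE \in P(\overline{\varphi})$, and the compatibility $\Trop\circ\overline{\iota} = \iota\circ\Trop$ (which the paper, like you, asserts via a commutative square rather than proving in detail, so you are not below its standard of rigor there). The difference is in how the ``if and only if'' is extracted. You track both components: since $\gamma(\varphi)=0$, the proposition preceding this one says that $(\mathrm{Id}-\iota)\Pic^0(\widetilde{\Gamma})$ and $(\mathrm{Id}-\iota)\Pic^1(\widetilde{\Gamma})$ are the two \emph{disjoint} connected components of $\ker\varphi_*$, and $\Trop$ carries $(\mathrm{Id}-\overline{\iota})\Pic^k(\widetilde{X})$ into $(\mathrm{Id}-\iota)\Pic^k(\widetilde{\Gamma})$, so membership in the identity component is both preserved and reflected. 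The paper instead proves only the single inclusion $\Trop\bigl((\mathrm{Id}-\overline{\iota})\Jac(\widetilde{X})\bigr) \subseteq (\mathrm{Id}-\iota)\Jac(\widetilde{\Gamma})$ and upgrades it to an equivalence by counting: $\lambda(\cD,\cdot)$ factors through $\Jac_2(\Gamma)$ and takes each value on exactly half of it, while exactly half of $\varphi^*\Jac_2(\Gamma)$ lies in $P(\varphi)$ (using injectivity of $\varphi^*$ and the fact that the $(g-1)$-dimensional torus $P(\varphi)$ has only $2^{g-1}$ two-torsion points), so a containment between two sets of equal size is an equality. Your route requires one extra classical input the paper does not explicitly invoke, namely that the non-identity component of $\ker\overline{\varphi}_*$ is $(\mathrm{Id}-\overline{\iota})\Pic^1(\widetilde{X})$; this is standard and is proved by the same degree-splitting trick ($D = D_1 + D_2$) that the paper uses on the tropical side. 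In exchange you avoid the pigeonhole argument and the appeal to non-degeneracy of the Weil pairing. Both proofs are valid; if you write yours up, do spell out the component description of $\ker\overline{\varphi}_*$ and the verification that $\overline{\iota}$ specializes to $\iota$, since those are the only places where content beyond a diagram chase actually lives.
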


\begin{proof}
Since $\Lambda$ is isotropic for the Weil pairing, the linear map $\lambda (\cD, \cdot )$ factors through $\Jac_2 (\Gamma)$.  Exactly half the elements of $\Jac_2 (\Gamma)$ map to 0 and half to 1.  Similarly, the $(g-1)$ dimensional torus $P(\varphi)$ contains $2^{g-1}$ 2-torsion points.  Since $\varphi^*$ is injective, half the elements of $\varphi^* \Jac_2 (\Gamma)$ are contained in $P(\varphi)$ and half are contained in the other connected component.  By pigeonhole it therefore suffices to show that
\[
\Trop ((\mathrm{Id} - \iota) \Jac (\widetilde{X})) \subseteq (\mathrm{Id} - \iota) \Jac (\widetilde{\Gamma}).
\]
This follows from the commutativity of the following square.
\[
\xymatrix{
\Jac (\widetilde{X}) \ar[r]^{\Trop} \ar[d]^{\iota}  & \Jac (\widetilde{\Gamma}) \ar[d]^{\iota} \\
\Jac (\widetilde{X}) \ar[r]^{\Trop} & \Jac (\widetilde{\Gamma}) }
\]
\end{proof}

\begin{example}
\label{Ex:WeilPairing}
We now return to the example of the double cover of the complete graph $K_4$ by the cube $\widetilde{K_4}$.  By Proposition \ref{Prop:Lifting}, there exists a unique double cover $\overline{\varphi} : \widetilde{X} \to X$ specializing to this double cover of $K_4$.  We let $\cD \in \Jac_2 (X)$ be the 2-torsion divisor corresponding to the double cover $\overline{\varphi}$.

The divisor depicted in Figure \ref{Fig:NotPrym} is equivalent to the divisor pictured on the left in Figure \ref{Fig:Pullback}.  This divisor is easily seen to be in $(\mathrm{Id} - \iota) \Pic^1 (\widetilde{K_4})$.  In other words, it is not contained in the Prym variety $P(\varphi)$.  Conversely, the divisor depicted in Figure \ref{Fig:Prym} is equivalent to the divisor on the right in Figure \ref{Fig:Pullback}.  This divisor is easily seen to be in $(\mathrm{Id} - \iota) \Pic^0 (\widetilde{K_4})$.  In other words, it is contained in the Prym variety $P(\varphi)$.  By Proposition \ref{Prop:WeilDescends}, therefore, if $\cE \in \Jac_2 (X)$ specializes to $D_{\gamma}$, then $\lambda (\cD,\cE) = 1$ when $\gamma$ is a triangle, and $\lambda (\cD,\cE) = 0$ when $\gamma$ is a square.

\begin{figure}[h]
\begin{tikzpicture}

\draw [ball color=black] (0,0) circle (0.55mm);
\draw [ball color=black] (4,0) circle (0.55mm);
\draw [ball color=black] (0,4) circle (0.55mm);
\draw [ball color=black] (4,4) circle (0.55mm);
\draw [ball color=black] (1,1) circle (0.55mm);
\draw [ball color=black] (1,3) circle (0.55mm);
\draw [ball color=black] (3,1) circle (0.55mm);
\draw [ball color=black] (3,3) circle (0.55mm);
\draw [ball color=black] (2,4) circle (0.55mm);
\draw [ball color=black] (4,2) circle (0.55mm);
\draw [ball color=black] (1,2) circle (0.55mm);
\draw [ball color=black] (2,1) circle (0.55mm);
\draw [ball color=black] (0.5,3.5) circle (0.55mm);
\draw [ball color=black] (3.5,0.5) circle (0.55mm);
\draw (0,0)--(4,0);
\draw (0,0)--(0,4);
\draw (0,0)--(1,1);
\draw (4,0)--(4,4);
\draw (4,0)--(3,1);
\draw (0,4)--(4,4);
\draw (0,4)--(1,3);
\draw (4,4)--(3,3);
\draw (1,1)--(1,3);
\draw (1,1)--(3,1);
\draw (1,3)--(3,3);
\draw (3,1)--(3,3);
\draw (2,4.3) node {\footnotesize $1$};
\draw (4.3,2) node {\footnotesize $-1$};
\draw (1.3,2) node {\footnotesize $1$};
\draw (2,1.3) node {\footnotesize $-1$};
\draw (3.5,0.2) node {\footnotesize $1$};
\draw (0.5,3.75) node {\footnotesize $-1$};

\end{tikzpicture}
\caption{A divisor equivalent to the divisor on the left in Figure \ref{Fig:Pullback}.}
\label{Fig:NotPrym}
\end{figure}
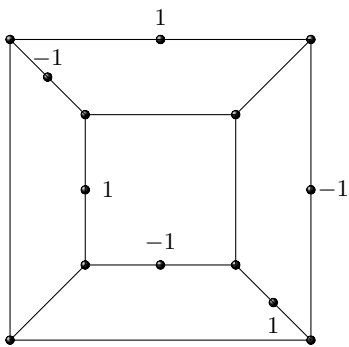

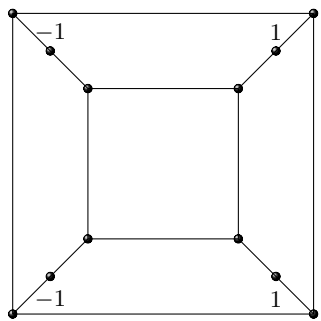
\begin{figure}[h]
\begin{tikzpicture}

\draw [ball color=black] (0,0) circle (0.55mm);
\draw [ball color=black] (4,0) circle (0.55mm);
\draw [ball color=black] (0,4) circle (0.55mm);
\draw [ball color=black] (4,4) circle (0.55mm);
\draw [ball color=black] (1,1) circle (0.55mm);
\draw [ball color=black] (1,3) circle (0.55mm);
\draw [ball color=black] (3,1) circle (0.55mm);
\draw [ball color=black] (3,3) circle (0.55mm);
\draw [ball color=black] (0.5,0.5) circle (0.55mm);
\draw [ball color=black] (0.5,3.5) circle (0.55mm);
\draw [ball color=black] (3.5,0.5) circle (0.55mm);
\draw [ball color=black] (3.5,3.5) circle (0.55mm);
\draw (0,0)--(4,0);
\draw (0,0)--(0,4);
\draw (0,0)--(1,1);
\draw (4,0)--(4,4);
\draw (4,0)--(3,1);
\draw (0,4)--(4,4);
\draw (0,4)--(1,3);
\draw (4,4)--(3,3);
\draw (1,1)--(1,3);
\draw (1,1)--(3,1);
\draw (1,3)--(3,3);
\draw (3,1)--(3,3);
\draw (0.5,0.2) node {\footnotesize $-1$};
\draw (3.5,0.2) node {\footnotesize $1$};
\draw (0.5,3.75) node {\footnotesize $-1$};
\draw (3.5,3.75) node {\footnotesize $1$};

\end{tikzpicture}
\caption{A divisor equivalent to the divisor on the right in Figure \ref{Fig:Pullback}.}
\label{Fig:Prym}
\end{figure}

\end{example}

\bibliography{math}

\end{document}